\newtheorem{theorem}{Theorem}[section]
\newtheorem{lemma}[theorem]{Lemma}
\newtheorem{corollary}[theorem]{Corollary}
\newtheorem{proposition}[theorem]{Proposition}
\theoremstyle{definition}
\newtheorem{definition}[theorem]{Definition}
\newtheorem{example}[theorem]{Example}
\newtheorem{remark}[theorem]{Remark}
\newtheorem{question}[theorem]{Question}
\mathchardef\mhyphen="2D
\author{Daniel Lännström}
\address{Department of Mathematics and Natural Sciences,
Blekinge Institute of Technology,
SE-37179 Karlskrona, Sweden}
\email{{\scriptsize daniel.lannstrom@bth.se}}
\date{\today}
\keywords{group graded ring, epsilon-strongly graded ring, Leavitt path algebra, partial skew group ring}
\subjclass[2010]{16W50,16S35}
\title[Induced quotient group gradings of epsilon-strongly graded rings]{Induced quotient group gradings of epsilon-strongly graded rings}
\begin{document}
\begin{abstract}
Let $G$ be a group and let $S=\bigoplus_{g \in G} S_g$ be a $G$-graded ring. Given a normal subgroup $N$ of $G$, there is a naturally induced $G/N$-grading of $S$. It is well-known that if $S$ is strongly $G$-graded, then the induced $G/N$-grading is strong for any $N$. The class of epsilon-strongly graded rings was recently introduced by Nystedt, Öinert and Pinedo as a generalization of unital strongly graded rings. We give an example of an epsilon-strongly graded partial skew group ring such that the induced quotient group grading is not epsilon-strong. Moreover, we give necessary and sufficient conditions for the induced $G/N$-grading of an epsilon-strongly $G$-graded ring to be epsilon-strong. Our method involves relating different types of rings equipped with local units ($s$-unital rings, rings with sets of local units, rings with enough idempotents) with generalized epsilon-strongly graded rings. 

\end{abstract}

\maketitle

\pagestyle{headings}

\section{Introduction}
Let $G$ be an arbitrary group with neutral element $e$. Our rings will be associative but not necessarily unital. Recall that a \emph{$G$-grading} of a ring $S$ is a family of additive subgroups $\{ S_g \}_{g \in G}$ of $S$ such that (i) $S=\bigoplus_{g \in G}S_g$ and (ii) $S_g S_h \subseteq S_{gh}$ for all $g,h \in G$. If the stronger condition $S_g S_h = S_{gh}$ holds for all $g, h \in G$, then the $G$-grading is called \emph{strong}. The $S_g$'s are called \emph{homogeneous components}. The \emph{principal component} $S_e$, i.e. the homogeneous component corresponding to the neutral element $e$, is a subring of $S$.  In general, there are many different $G$-gradings of a fixed ring $S$. However, as is common in the literature, we will write `$S$ is a $G$-graded ring' when we consider $S$ together with some implicit $G$-grading.   

We now recall the construction of the induced quotient grading. Let $S=\bigoplus_{g \in G} S_g$ be a $G$-graded ring and let $N$ be a normal subgroup of $G$. There is a natural way to define a new, induced $G/N$-grading of $S$. Let, 
\begin{equation*}
S_C :=\bigoplus_{g \in C} S_g, \qquad \forall \, C \in G/N.
\end{equation*}
It is straightforward to check that (i) $S=\bigoplus_{C \in G/N} S_C$ and (ii) $S_C S_{C'} \subseteq S_{C C'}$ for any $C, C' \in G/N$. The $G/N$-grading $\{ S_C \}_{C \in G/N}$ of $S$ is called the \emph{induced $G/N$-grading}. Note that the principal component of this $G/N$-grading is $S_{e N}=S_N$.  It is well-known that the `strongness' of the grading is preserved under this construction: If the original $G$-grading $\{ S_g \}_{g \in G}$ is strong, then the induced $G/N$-grading $\{ S_C \}_{C \in G/N}$ is strong (cf. Proposition \ref{prop:1}). This property is  of particular importance for the theory of strongly group graded rings initiated by Dade (see Section \ref{sec:prel}). 

The class of \emph{epsilon-strongly $G$-graded rings} was  introduced by Nystedt, Öinert and Pinedo \cite{nystedt2016epsilon} as a generalization of unital strongly $G$-graded rings.  A $G$-grading $\{S_g \}_{g \in G}$ of $S$ is \emph{epsilon-strong} if any only if, for every $g \in G$, there is an element $\epsilon_g \in S_g S_{g^{-1}}$ such that for all $s \in S_g$ the equations $\epsilon_g s = s = s \epsilon_{g^{-1}}$ hold (see \cite[Prop. 7]{nystedt2016epsilon}). In that case we say that $S$ is \emph{epsilon-strongly $G$-graded}.
It is natural to ask if `epsilon-strongness' is preserved by the induced quotient grading. 
This question was the starting point of this paper.
\begin{question}
If $\{ S_g \}_{g \in G}$ is epsilon-strong, is then $\{ S_C \}_{C \in G/N}$ epsilon-strong?
\label{question:intro}
\end{question}
We will give an example of an epsilon-strongly graded unital partial skew group ring such that the induced quotient group grading is not epsilon-strong (Example \ref{ex:1}). Our main result is a characterization of when the induced quotient group grading is epsilon-strong in terms of idempotents of the principal component (Theorem \ref{thm:main1}). 

To be able to formulate our main theorem, we first need some notation. For any ring $R$, let $E(R)$ denote the set of idempotents of $R$. There is a partial order on $E(R)$ defined by $a \leq b \iff a=ab =ba.$ Let $\vee$ and $\wedge$ denote the least upper bound and greatest lower bound with respect to this ordering. In the case where $a, b \in E(R)$ commute, it can be proven that $a \vee b = a + b-ab$ and $a \wedge b = ab$. For any set $F \subseteq E(R)$, we write $\bigvee F$ for the $\vee$-closure of $F$; i.e. $a, b \in \bigvee F \implies a \vee b \in \bigvee F$ provided that the upper bound $a \vee b$ exists. Recall (see \cite[Prop. 5]{nystedt2016epsilon}) that if $\{ S_g \}_{g \in G}$ is epsilon-strong, then the elements $\epsilon_g$ are central idempotents of the principal component $S_e$.
Our main result reads as follows:

\begin{theorem}
Let $S$ be an epsilon-strongly $G$-graded ring and let $N$ be a subgroup of $G$.
The induced $G/N$-grading $\{ S_C \}_{C \in G/N}$ is epsilon-strong if and only if, for every $C \in G/N$, there is some element $\chi_C \in E(S_N)$ such that $f \leq \chi_C$ for all $f \in \bigvee\{ \epsilon_g \mid g \in C  \}$.
\label{thm:main1}
\end{theorem}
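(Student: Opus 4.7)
The plan is to establish the two implications separately; the ``only if'' direction is a short computation, while the ``if'' direction is more delicate and requires extracting a single idempotent $\epsilon_C \in S_C S_{C^{-1}}$ from the hypothesised upper bound. For the forward direction, I would take $\chi_C := \epsilon_C$, the witness idempotent for the induced grading. Since any product $S_g S_h$ with $g \in C$ and $h \in C^{-1}$ lies in $S_{gh} \subseteq S_N$, one has $S_C S_{C^{-1}} \subseteq S_N$, and hence $\chi_C \in E(S_N)$. To verify $\epsilon_g \leq \chi_C$ for each $g \in C$, I would write $\epsilon_g = \sum_i a_i b_i$ with $a_i \in S_g$ and $b_i \in S_{g^{-1}}$ and apply the defining identities of the induced epsilon-strong grading: since $a_i \in S_C$ we get $\chi_C a_i = a_i$, and since $b_i \in S_{C^{-1}}$ and $(C^{-1})^{-1} = C$ we get $b_i \chi_C = b_i$. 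Together these give $\chi_C \epsilon_g = \epsilon_g = \epsilon_g \chi_C$. Extending to all of $\bigvee\{\epsilon_g \mid g \in C\}$ is then a straightforward induction on the number of $\vee$-operations used to build $f$: the $\epsilon_g$'s commute pairwise (being central in $S_e$), and the inductive step $f_1 \vee f_2 = f_1 + f_2 - f_1 f_2$ is visibly $\leq \chi_C$ when both $f_1, f_2$ are.

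For the backward direction, given $\chi_C$ for every coset I need to produce $\epsilon_C \in S_C S_{C^{-1}}$ satisfying $\epsilon_C s = s = s \epsilon_{C^{-1}}$ for $s \in S_C$. The preparatory observation is that $S_C S_{C^{-1}}$ is a two-sided ideal of $S_N$ containing every $\epsilon_g$ with $g \in C$, hence---by inclusion--exclusion, which writes each $e_F := \bigvee_{g \in F} \epsilon_g$ as a polynomial in the commuting $\epsilon_g$'s---it contains every finite join $e_F$ for finite $F \subseteq C$. Moreover, each $e_F$ acts as a left identity on $\bigoplus_{g \in F} S_g$, since for $s \in S_g$ with $g \in F$ the factor $(1 - \epsilon_g)$ in the product $1 - e_F = \prod_{g' \in F}(1 - \epsilon_{g'})$ annihilates $s$ from the left. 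Thus $\{e_F\}_{F \subseteq C \text{ finite}}$ is an upward-directed family of local left identities for $S_C$ inside $S_C S_{C^{-1}}$, each bounded above by $\chi_C$; the symmetric construction produces local right identities inside $S_{C^{-1}} S_C$ bounded by $\chi_{C^{-1}}$.

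The main obstacle, and where I expect most of the technical work to lie, is upgrading these local-unit pictures to single idempotents. My plan is to use the upper bound $\chi_C$ to argue that the directed family $\{e_F\}$ admits a supremum $\epsilon_C$ that already lies in $S_C S_{C^{-1}}$; once this is available, $\epsilon_C s = s$ for $s \in S_C$ follows at once from $e_F s = s$ together with $e_F \leq \epsilon_C$, and the right-hand identity follows symmetrically. I expect the existence-of-supremum step to require the paper's machinery relating generalized epsilon-strongly graded rings to $s$-unital rings, rings with local units, and rings with enough idempotents, as foreshadowed in the abstract: the bound $\chi_C$ should control the $s$-unital structure of $S_C S_{C^{-1}}$ precisely enough to produce the specific idempotent we need.
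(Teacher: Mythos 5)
Your forward direction is correct and is in substance the paper's: the identity element $\epsilon_C$ of $S_C S_{C^{-1}}$ lies in $S_C S_{C^{-1}} \subseteq S_N$ and absorbs each $\epsilon_g$ with $g \in C$ (by the computation you give), hence absorbs every finite join of commuting idempotents built from them. Your preparatory observations for the converse are also sound; they amount to reproving part (a) of Theorem \ref{thm:main}, namely that the finite joins $e_F$, $F \subseteq C$ finite, form a set of local units for $S_C S_{C^{-1}}$.

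The gap is the step you defer. You plan to ``use the upper bound $\chi_C$ to argue that the directed family $\{e_F\}$ admits a supremum $\epsilon_C$ that already lies in $S_C S_{C^{-1}}$,'' but no such argument is given, and none can succeed from the hypothesis read literally as $\chi_C \in E(S_N)$: an upper bound in $E(S_N)$ for all of $\bigvee\{\epsilon_g \mid g \in C\}$ always exists, since $\epsilon_e = 1_S \in S_e \subseteq S_N$ dominates every idempotent of $S$, whereas Example \ref{ex:1} exhibits an induced quotient grading of an epsilon-strongly graded ring that is \emph{not} epsilon-strong (there the natural candidate $d\delta_0$ is an idempotent of $S_N$ but does not lie in $S_1S_1$). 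The hypothesis has to be used the way the paper's own proof uses it, through Proposition \ref{prop:greatest}: the upper bound must be an idempotent of the ring $S_C S_{C^{-1}}$ itself. Once $\chi_C \in E(S_C S_{C^{-1}})$, no supremum construction is needed: for $z \in S_C S_{C^{-1}}$ choose $e_F$ with $e_F z = z = z e_F$ and conclude $\chi_C z = \chi_C e_F z = e_F z = z$ and likewise $z\chi_C = z$ by Lemma \ref{lem:idem}, so $\chi_C$ \emph{is} the multiplicative identity of $S_C S_{C^{-1}}$, i.e.\ $\epsilon_C = \chi_C$; the identities $\epsilon_C s = s = s\epsilon_{C^{-1}}$ for $s \in S_C$ then follow as in Proposition \ref{prop:epsilon_char}(a). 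The paper's proof is exactly this reduction: Proposition \ref{prop:symmetric} gives symmetry of the induced grading (a condition your outline omits but which Definition \ref{def:nystedt-gradings}(a) requires), Theorem \ref{thm:main}(a) gives the local units, and Proposition \ref{prop:greatest} converts the upper bound into the identity element. So: replace the supremum plan with a direct application of Proposition \ref{prop:greatest}, and make sure the upper bound you invoke lives in $E(S_C S_{C^{-1}})$, not merely in $E(S_N)$.
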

%

Our method to prove Theorem \ref{thm:main1} involves generalizations of epsilon-strongly graded rings, which we call \emph{Nystedt-Öinert-Pinedo graded rings} (see Section \ref{sec:2}). The first generalization is the class of \emph{nearly epsilon-strongly $G$-graded rings} introduced by Nystedt and Öinert in \cite{nystedt2017epsilon} (see also \cite{nystedt2018epsilon}). Inspired by their definition, we will introduce two additional families of graded rings: \emph{essentially and virtually epsilon-strongly graded rings} (see Definition \ref{def:nystedt-gradings}). The proof of Theorem \ref{thm:main1} will amount to showing that any induced quotient group grading of an epsilon-strongly graded ring is essentially epsilon-strong (Theorem \ref{thm:main}). 


This paper will hopefully be the beginning of a larger effort to develop a theory of epsilon-strongly graded rings similar to the theory of strongly graded rings (cf. Section \ref{sec:prel}). Seemingly unrelated to the present investigation, the induced quotient group grading construction has been studied independently in \cite{johnson2012commutative} and \cite{sehgal2003graded}.

\smallskip

Below is an outline of the rest of this paper:

\smallskip

In Section \ref{sec:prel}, we give some additional background and a precise formulation of the problems considered in this paper.

In Section \ref{sec:2}, we define and prove some basic results about Nystedt-Öinert-Pinedo graded rings. In particular, we prove that only unital rings admit epsilon-strong gradings (Proposition \ref{prop:epsilon_unital}).

In Section \ref{sec:3}, we prove that Leavitt path algebras are virtually epsilon-strongly $G$-graded (Proposition \ref{prop:lpa_is_virtually}). This class will be an important source of examples. 



In Section \ref{sec:4}, we investigate the induced quotient group gradings of Nystedt-Öinert-Pinedo graded rings. We prove that the induced $G/N$-grading of a nearly epsilon-strongly $G$-graded ring is also nearly epsilon-strong (Proposition \ref{prop:nearly_induced}). Under certain assumptions, the same conclusion also holds for essentially epsilon-strongly graded rings (Corollary \ref{cor:essentially_induced}) and virtually epsilon-strongly graded rings (Proposition \ref{prop:virtually_induced}). As a special case, we get that the induced $G/N$-grading of a Leavitt path algebra is also virtually epsilon-strong (Corollary \ref{cor:lpa_induced}). Finally, we establish our main results: Theorem \ref{thm:main} and Theorem \ref{thm:main1}.

In Section \ref{sec:5}, we introduce a special type of epsilon-strong $G$-gradings called epsilon-finite gradings. This class has the property that for any normal subgroup $N$ of $G$, the induced $G/N$-grading is epsilon-finite (Proposition \ref{prop:epsilon_finite}). Moreover, one-sided noetherianity of the principal component $S_e$ is a sufficient condition for $S$ to be epsilon-strongly $G$-graded (Theorem \ref{thm:induced_epsilon}). 

In Section \ref{sec:6}, we give an example of an epsilon-strongly $\mathbb{Z}$-graded ring such that the induced $\mathbb{Z}/2\mathbb{Z}$-grading is not epsilon-strong (Example \ref{ex:1}).

In Section \ref{sec:7}, we consider induced quotient group gradings of epsilon-crossed products (see Section \ref{sub:epsilon_cross}). We give sufficient conditions for the induced $G/N$-grading of a unital partial skew group ring to give an epsilon-crossed product (Proposition \ref{prop:partial_crossed}).

\section{The induced quotient group grading functor}
\label{sec:prel}


Broadly speaking, we aim to investigate how the theory of strongly group graded rings
relates to epsilon-strongly graded ring. The systematic study of strongly group graded rings began with Dade's seminal paper \cite{dade1980group}. In that paper, he took a functorial approach and studied certain functors defined on the categories of strongly group graded rings and modules. Most notable is the celebrated Dade's Theorem, which asserts that a $G$-graded ring $S$ is strongly graded if and only if the category of graded modules over $S$ is equivalent to the category of modules over $S_e$. The following introductory example from \cite{dade1980group} shows the relation to classical Clifford theory. Let $G$ be an arbitrary group and recall that the complex group ring $\mathbb{C}[G] = \bigoplus_{g \in G} \mathbb{C} \delta_g$ is strongly $G$-graded. Furthermore, let $N$ be a normal subgroup of $G$. The induced quotient group grading $\mathbb{C}[G] = \bigoplus_{C \in G/N} S_C$ is strong.  Note that $S_N = \bigoplus_{g \in N} \mathbb{C} \delta_g = \mathbb{C}[N]$. By Dade's Theorem, there is an equivalence of categories between graded $\mathbb{C}[G]$-modules and $\mathbb{C}[N]$-modules. This example motivates us to take a functorial approach and to study the induced quotient group gradings of epsilon-strongly graded rings. 

In the remainder of this section, we will introduce further notation to precisely formulate the problems considered in this paper.


\subsection{Category of epsilon-strongly graded rings}

Let $G \mhyphen \mathrm{RING}$ denote the category of rings equipped with a $G$-grading. More precisely, the objects of $G \mhyphen \mathrm{RING}$ are pairs $(S, \{ S_g \}_{g \in G})$ where $S$ is a ring and $\{ S_g \}_{g \in G}$ is a $G$-grading of $S$. The morphisms are ring homomorphisms that respect the gradings. To make this more precise, let $(S, \{ S_g \}_{g \in G})$ and $(T, \{ T_g \}_{g \in G})$ be objects in $G \mhyphen \mathrm{RING}$. The ring homomorphism $\phi \colon S \to T$ is called \emph{$G$-graded} if $\phi(S_g) \subseteq T_g$ for each $g \in G$. The class $\text{hom}( (S, \{ S_g \}_{g \in G}), (T, \{ T_g \}_{g \in G}) )$ consists of the $G$-graded ring homomorphisms $S \to T$. By the definition of $G \mhyphen \mathrm{RING}$, it is straightforward to see that the category of strongly $G$-graded rings, which we denote by $G \mhyphen\mathrm{STRG}$, is a full subcategory of $G \mhyphen\mathrm{RING}$. We will later work with other subclasses of $G$-gradings, which similarly corresponds to full subcategories of $G \mhyphen \mathrm{RING}$. 

Next, we recall (see e.g. \cite[pg. 3]{nastasescu2004methods}) the definition of the \emph{induced quotient grading functor}. With the notation above, let $N$ be a normal subgroup of $G$ and let $\{ S_C \}_{C \in G/N}$, $\{ T_C \}_{C \in G/N}$ be the induced $G/N$-gradings of $S$ and $T$ respectively. If $\phi \colon S \to T$ is a $G$-graded homomorphism, then $\phi(S_C) \subseteq T_C$ for each $C \in G/N$. Hence, $\phi \colon S \to T$ is $G/N$-graded with respect to the induced $G/N$-gradings. This implies that the induced quotient group grading construction defines a functor,
\begin{align*}
U_{G/N} \colon G \mhyphen\mathrm{RING} & \to G/N \mhyphen \mathrm{RING}, \\
(S, \{ S_g \}_{g \in G}) & \mapsto (S, \{ S_C \}_{ C \in G/N }), \\
\text{hom}((S, \{ S_g \}_{g \in G}), (T, \{ T_g \}_{g \in G})) \ni \phi & \mapsto \phi \in \text{hom}((S, \{S_C \}_{C \in G/N}), (T, \{ T_C \}_{C \in G/N})).
\end{align*}


It is well-known that `strongness' is preserved by the induced quotient group grading: 

\begin{proposition}(cf. \cite[Prop. 1.2.2]{nastasescu2004methods}) Let $G$ be an arbitrary group and let $N$ be any normal subgroup of $G$. The functor $U_{G/N}$ restricts to the subcategory $G \mhyphen \mathrm{STRG}$. In other words, the functor,
\begin{equation*}
U_{G/N} \colon G \mhyphen \mathrm{STRG} \to G/N \mhyphen \mathrm{STRG},
\end{equation*} 
is well-defined.
\label{prop:1}
\end{proposition}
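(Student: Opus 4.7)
The plan is to verify that the functor $U_{G/N}$, when restricted to $G\mhyphen\mathrm{STRG}$, lands in $G/N\mhyphen\mathrm{STRG}$. On morphisms there is nothing to check: a $G$-graded ring homomorphism $\phi \colon S \to T$ automatically satisfies $\phi(S_C) = \phi\bigl(\bigoplus_{g \in C} S_g\bigr) \subseteq \bigoplus_{g \in C} T_g = T_C$ for every $C \in G/N$, so $\phi$ is also $G/N$-graded with respect to the induced gradings. The content of the proposition therefore lies in showing that strongness of the original $G$-grading forces strongness of the induced $G/N$-grading, i.e.\ that $S_C S_{C'} = S_{CC'}$ for all $C, C' \in G/N$.

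The inclusion $S_C S_{C'} \subseteq S_{CC'}$ is immediate from the fact that $\{S_C\}_{C \in G/N}$ is a $G/N$-grading (already noted in the introduction). For the reverse inclusion, I would argue pointwise on homogeneous components: it suffices to show $S_h \subseteq S_C S_{C'}$ for every $h \in G$ with $hN = CC'$. Fix such an $h$ and pick any representative $g \in C$. Then $g^{-1}h \in g^{-1}(CC') = g^{-1}(gN)(C') = N C' = C'$, so setting $k := g^{-1}h$ we obtain a factorisation $h = gk$ with $g \in C$ and $k \in C'$. By the strongness of the $G$-grading, $S_h = S_{gk} = S_g S_k \subseteq S_C S_{C'}$. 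Summing over all $h$ with $hN = CC'$ yields $S_{CC'} = \bigoplus_{h \in CC'} S_h \subseteq S_C S_{C'}$, as required.

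There is essentially no obstacle here; the whole argument is a direct unfolding of the definitions, and the only substantive ingredient is the elementary observation that every element of a product coset $CC'$ can be written as a product of one element from $C$ and one element from $C'$. I would expect the reference \cite[Prop.\ 1.2.2]{nastasescu2004methods} to prove exactly this, and for the author to either reproduce the short argument above or simply cite that reference and remark on the morphism part.
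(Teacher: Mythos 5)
Your proof is correct, and the paper in fact gives no proof of its own for this proposition --- it simply defers to the cited reference \cite[Prop.~1.2.2]{nastasescu2004methods}, exactly as you anticipated. Your argument (factoring any $h\in CC'$ as $h=g(g^{-1}h)$ with $g\in C$ and $g^{-1}h\in C'$, then invoking $S_h=S_gS_{g^{-1}h}$) is the standard one and fills the gap correctly.
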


We denote the category of epsilon-strongly $G$-graded rings by $G \mhyphen \epsilon \mathrm{STRG}$. The objects of this category are epsilon-strongly $G$-graded rings and the morphisms are $G$-graded ring homomorphisms. The basic problem of this paper (cf. Question \ref{question:intro}) can be reformulated as:
\begin{question}
For which objects $(S, \{ S_g \}_{g \in G}) \in \text{ob}(G \mhyphen\epsilon \mathrm{STRG})$ do we have that, 
\begin{equation}
U_{G/N} ( (S, \{ S_g \}_{g \in G})) = (S, \{ S_C \}_{C \in G/N}) \in \text{ob}(G/N \mhyphen \epsilon \mathrm{STRG})?
\label{eq:11}
\end{equation}
In particular, is the restriction of $U_{G/N}$ to $G \mhyphen \epsilon \textrm{STRG}$ well-defined?
\label{question:2}
\end{question}
We will give an example of an epsilon-strongly $G$-graded ring such that (\ref{eq:11}) does not hold (Example \ref{ex:1}). In other words, the functor $U_{G/N}$ does not restrict to $G \mhyphen \epsilon \mathrm{STRG}$. Note that Theorem \ref{thm:main1} provides a complete answer to Question \ref{question:2}, i.e. a characterization of $(S, \{ S_g \}_{g \in G}) \in \text{ob}(G \mhyphen \epsilon \mathrm{STRG})$ such that (\ref{eq:11}) holds. 

\subsection{Epsilon-crossed products}
\label{sub:epsilon_cross}
Let $S$ be a strongly $G$-graded ring. Recall (see e.g. \cite[pg. 2]{nastasescu2004methods}) that $S$ is called an \emph{algebraic crossed product} if $S_g$ contains an invertible element for each $g \in G$. The class of epsilon-crossed products was introduced by Nystedt, Öinert and Pinedo in \cite{nystedt2016epsilon} as an `epsilon-analogue' of the classical algebraic crossed products. Let $G$ be an arbitrary group and let $S$ be an arbitrary epsilon-strongly $G$-graded ring. In other words, take an arbitrary object $(S, \{ S_g \}_{g \in G}) \in \text{ob}(G \mhyphen\epsilon \mathrm{STRG}).$ Recall (see \cite[Def. 30]{nystedt2016epsilon}) that an element $s \in S_g$ is called \emph{epsilon-invertible} if there exists some element $t \in S_{g^{-1}}$ such that $st=\epsilon_g$ and $ts=\epsilon_{g^{-1}}$. Furthermore, recall (see \cite[Def. 32]{nystedt2016epsilon}) that $(S, \{S_g \}_{g \in G})$ is called an \emph{epsilon-crossed product} if there is an epsilon-invertible element in $S_g$ for all $g \in G$. Let $G \mhyphen \epsilon \mathrm{CROSS}$ denote the category of epsilon-crossed products. The morphisms are $G$-graded ring homomorphisms. It is straightforward to show that $G \mhyphen \epsilon \mathrm{CROSS}$ is a full subcategory of $G \mhyphen \epsilon \mathrm{STRG}$.

For an algebraic crossed-product, the induced quotient group grading gives an algebraic crossed product (see e.g. \cite[Prop. 1.2.2]{nastasescu2004methods}). It is natural to ask when the induced quotient grading of an epsilon-crossed product gives an epsilon-crossed product. This is better formulated in terms of the functor $U_{G/N}$:
\begin{question}
For which objects $(S, \{ S_g \}_{g \in G}) \in \text{ob}(G \mhyphen \epsilon \mathrm{CROSS})$ do we have that, $$U_{G/N}((S, \{ S_g \}_{g \in G}) = (S, \{ S_C \}_{C \in G/N}) \in \text{ob}(G/N \mhyphen \epsilon \mathrm{CROSS})?$$ 
\label{question:1}
\end{question}
The author has not been able to answer Question \ref{question:1} in full generality. However, in Section 7, we will provide examples of epsilon-crossed products $(S, \{ S_g \}_{g \in G})  \in \text{ob}(G \mhyphen \epsilon \mathrm{CROSS})$ such that $(S, \{ S_C \}_{ C \in G/N}) \in \text{ob}(G/N \mhyphen \epsilon \mathrm{CROSS})$ and examples such that $(S, \{ S_C \}_{ C \in G/N}) \not \in \text{ob}(G/N \mhyphen\epsilon \mathrm{CROSS})$.

\section{Nystedt-Öinert-Pinedo graded rings}
\label{sec:2}


The purpose of this section is to introduce two new generalizations of epsilon-strongly graded rings. To this end, we recall several different ways in which a non-unital ring might have approximate multiplicative identity elements (also known as local units). We refer the reader to \cite{nystedt2018unital} for a detailed survey of these definitions. A ring $R$ has a \emph{set of local units} $E$ if $E$ is a $\vee$-closed subset of $E(R)$ consisting of commuting idempotents such that for every $r \in R$ there exists some $f \in E$ such that $f r = r f = r$ (cf. \cite{abrams1983morita} and \cite[Def. 21]{nystedt2018unital}). A ring $R$ is said to have \emph{enough idempotents} if there is a set $F$ of pairwise orthogonal, commuting idempotents such that $\bigvee F$ is a set of local units for $R$ (cf. e.g. \cite[Def. 27]{nystedt2018unital}).
Finally, recall that a ring $R$ is called \emph{$s$-unital} if $x \in xR \cap Rx$ for each $x \in R$. This is equivalent to that there, for every positive integer $n$ and elements $s_1, s_2, \dots, s_n \in R$, exists some $f \in R$ satisfying $fs_i = s_i f = s_i$ for all $1 \leq i \leq n$ (see \cite[Thm. 1]{tominaga1976s}). These definitions relate to each other in the following way.


\begin{proposition}(\cite{nystedt2018unital})
The following strict inclusions hold between the classes of rings.
\begin{align*}
\{ \text{unital rings} \} &\subsetneq \{ \text{rings with enough idempotents} \} \\ & \subsetneq \{ \text{rings with sets of local units} \} \\ &\subsetneq \{ \text{s-unital rings} \}.
\end{align*}\label{prop:local_unital_rings_inclusions}
\end{proposition}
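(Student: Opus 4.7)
The plan is to split the claim into two layers: first establish the three inclusions by unpacking the definitions, and then exhibit a counterexample on each boundary to show that the inclusions are strict. Since the proposition is attributed to \cite{nystedt2018unital}, the cleanest presentation will be to give the inclusions in detail and refer to that survey for the more delicate strictness examples.

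For the inclusions, the verification is essentially one line each. If $R$ is unital with identity $1$, then $F=\{1\}$ is (trivially) a family of pairwise orthogonal commuting idempotents, and $\bigvee F=\{1\}$ is a set of local units, so $R$ has enough idempotents. If $R$ has enough idempotents witnessed by $F$, then by the very definition $\bigvee F$ is already a set of local units. Finally, if $R$ admits a set of local units $E$ and $x\in R$, choosing $f\in E$ with $fx=xf=x$ yields $x=xf\in xR$ and $x=fx\in Rx$, hence $x\in xR\cap Rx$, which is exactly $s$-unitality.

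For strictness of the first inclusion, I would take an infinite direct sum $R=\bigoplus_{i\in\mathbb{N}} k$ of copies of a nonzero unital ring $k$: the standard unit vectors $\{e_i\}_{i\in\mathbb{N}}$ are pairwise orthogonal commuting idempotents whose $\vee$-closure consists of finitely supported sums and already serves as a set of local units, while any hypothetical global identity would need to act as $1$ in every coordinate and therefore lie outside $R$. For the second strict inclusion, one needs a ring where the local units cannot be refined to pairwise orthogonal ones; a direct limit of matrix algebras along a non block-diagonal embedding, or one of the explicit constructions in \cite{nystedt2018unital}, supplies such an example. For the third, an $s$-unital ring with no set of local units at all can likewise be extracted from \cite{nystedt2018unital}.

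The routine part is the three one-line implications; the only genuine obstacle is exhibiting the strictness at the second and third boundaries, since constructing a ring with local units but not enough idempotents, or an $s$-unital ring without local units, requires a genuinely nontrivial example. Because these counterexamples already appear in the cited survey, I would simply reproduce or reference them rather than rebuild the constructions here.
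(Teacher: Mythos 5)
Your proposal is correct and is essentially what the paper does: the paper states this proposition as a cited result from \cite{nystedt2018unital} with no proof of its own, so your one-line verifications of the three inclusions (which are all right) plus deferral of the strictness examples to that survey matches the paper's treatment exactly. One small caution: your ad hoc suggestion of ``a direct limit of matrix algebras along a non block-diagonal embedding'' for the second strict inclusion is unlikely to work, since such limits generally still have enough idempotents via the diagonal matrix units; for that boundary you should rely on the survey's explicit construction, which is the same example the paper itself invokes in Remark \ref{rem:inclusions}(b).
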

\vspace{-1.5em}
Before defining Nystedt-Öinert-Pinedo graded rings, we recall the following:

\begin{definition}(\cite[Def. 4.5]{clark2018generalized})
A $G$-graded ring $S$ is called \emph{symmetrically graded} if, 
\begin{equation*}
S_g S_{g^{-1}} S_g  = S_g, \qquad \forall g \in G.
\end{equation*}
\end{definition}

Consider a $G$-graded ring $S=\bigoplus_{g \in G}S_g$ where the principal component is $S_e$. Note that $S_g S_{g^{-1}} \subseteq S_e$ is an $S_e$-ideal for each $g \in G$. The classes of Nystedt-Öinert-Pinedo graded rings correspond to symmetrically $G$-graded rings with local unit properties on the rings $S_g S_{g^{-1}}$ according to the following:

\begin{figure}[h]
\begin{tabular}{ c | c  }
class & $S_g S_{g^{-1}}$ \\
\hline
epsilon-strongly (see \cite{nystedt2016epsilon}) & unital ring \\
virtually epsilon-strongly & ring with enough idempotents \\
essentially epsilon-strongly & ring with sets of local units \\
nearly epsilon-strongly (see \cite{nystedt2017epsilon}) & $s$-unital ring 
\end{tabular}
\caption{Nystedt-Öinert-Pinedo classes of graded rings}
\label{fig:1}
\end{figure}


We also explicitly write down these crucial definitions.
\begin{definition}
Let $S=\bigoplus_{g \in G} S_g$ be a symmetrically $G$-graded ring.
\begin{enumerate}[(a)]
\begin{item}
If $S_g S_{g^{-1}}$ is a unital ring for each $g \in G$, then $S$ is called \emph{epsilon-strongly graded} (cf.  \cite[Prop. 7]{nystedt2016epsilon}).
\end{item}
\begin{item}
If $S_g S_{g^{-1}}$ is a ring with enough idempotents for each $g \in G$, then $S$ is called \emph{virtually epsilon-strongly graded}.
\end{item}
\begin{item}
If $S_g S_{g^{-1}}$ is a ring with a set of local units for each $g \in G$, then $S$ is called \emph{essentially epsilon-strongly graded}.
\end{item}
\begin{item}
If $S_g S_{g^{-1}}$ is an $s$-unital ring for each $g \in G$, then $S$ is called \emph{nearly epsilon-strongly graded} (cf. \cite[Prop. 10]{nystedt2017epsilon}).
\end{item}
\end{enumerate}
\label{def:nystedt-gradings}
\end{definition}

The notion of an essentially epsilon-strong grading will be central to our study of the induced quotient group gradings of epsilon-strongly graded rings (see Theorem \ref{thm:main}). Moreover, virtually epsilon-strong gradings relate to Leavitt path algebras (see Proposition \ref{prop:lpa_is_virtually}). 

\begin{remark}
We make some remarks concerning Definition \ref{def:nystedt-gradings}.
\begin{enumerate}[(a)]
\begin{item}
By Proposition \ref{prop:local_unital_rings_inclusions}, we have the following strict inclusions between the classes of Nystedt-Öinert-Pinedo graded rings:
\begin{align*}
\{ \text{epsilon-strongly graded rings} \} & \subsetneq \{ \text{virtually epsilon-strongly graded rings} \} \\ & \subsetneq \{ \text{essentially epsilon-strongly graded rings} \} \\ &\subsetneq \{ \text{nearly epsilon-strongly graded rings} \} .
\end{align*}
\end{item}
\begin{item}
Let $R$ be a ring that has a set of local unit but does not have enough idempotents (see \cite[Expl. 28]{nystedt2018unital}). Then $R$ is graded by the trivial group by putting $S_e:=R$. Note that $R R = R$ is a ring with a set of local units. Hence, $R$ is trivially essentially epsilon-strongly graded. However, since $R$ does not have enough idempotents, this grading cannot be virtually epsilon-strong. In fact, this is our only example distinguishing essentially epsilon-strong gradings from virtually epsilon-strong gradings (see Remark \ref{rem:thm_rem}).
\end{item}
\end{enumerate}
\label{rem:inclusions}
\end{remark}


We recall the following characterization, which will be used implicitly in the rest of this paper. Note that Proposition \ref{prop:epsilon_char}(a) implies that the definition of epsilon-strong given in the introduction is equivalent to Definition \ref{def:nystedt-gradings}(a).
\begin{proposition}(\cite[Prop. 7, Prop. 10]{nystedt2017epsilon})
Let $S=\bigoplus_{g \in G} S_g$ be a $G$-graded ring. The following assertions hold:
\begin{enumerate}[(a)]
\begin{item}
$S$ is epsilon-strongly graded if and only if, for each $g \in G$, there exist $\epsilon_g \in S_g S_{g^{-1}}$ and $\epsilon_{g^{-1}} \in S_{g^{-1}} S_g$ such that $\epsilon_g s = s = s \epsilon_{g^{-1}}$ for every $s \in S_g$;
\end{item}
\begin{item}
$S$ is nearly epsilon-strongly graded if and only if, for each $g \in G$ and $s \in S_g$, there exist $\epsilon_g(s) \in S_g S_{g^{-1}}$ and $\epsilon_g'(s) \in S_{g^{-1}} S_g$ such that $\epsilon_g(s) s = s = s \epsilon_g'(s).$
\end{item}
\end{enumerate}
\label{prop:epsilon_char}
\end{proposition}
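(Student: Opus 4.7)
The plan is to prove both parts by exploiting the fact that every element of $S_g$ admits a symmetric decomposition $s = \sum_{i=1}^n a_i b_i c_i$ with $a_i, c_i \in S_g$ and $b_i \in S_{g^{-1}}$, which is granted by the hypothesis $S_g = S_g S_{g^{-1}} S_g$. The forward implications will exhibit $\epsilon_g$ (resp.\ $\epsilon_g(s)$) as the multiplicative (resp.\ local) identity of $S_g S_{g^{-1}}$ and propagate the identity through such a decomposition to act on the original element of $S_g$. The converse implications run in reverse: first existence of the $\epsilon$'s recovers the symmetric-grading identity $S_g \subseteq S_g S_{g^{-1}} S_g$, and then one verifies the required unit properties of $S_g S_{g^{-1}}$.

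For (a), suppose first that $S$ is symmetrically graded and that $S_g S_{g^{-1}}$ is unital with identity $\epsilon_g$ (and likewise $\epsilon_{g^{-1}}$ for $S_{g^{-1}} S_g$). Given $s = \sum_i a_i b_i c_i \in S_g$, each $a_i b_i$ lies in $S_g S_{g^{-1}}$, so $\epsilon_g (a_i b_i) = a_i b_i$, and summing gives $\epsilon_g s = s$; the identity $s \epsilon_{g^{-1}} = s$ follows symmetrically from $b_i c_i \in S_{g^{-1}} S_g$. Conversely, from $\epsilon_g \in S_g S_{g^{-1}}$ and $s = \epsilon_g s$ one deduces $S_g \subseteq S_g S_{g^{-1}} S_g$ (the reverse inclusion is automatic), so $S$ is symmetrically graded. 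Applying the hypothesis for both $g$ and $g^{-1}$, for any $x = \sum a_i b_i \in S_g S_{g^{-1}}$ we obtain $\epsilon_g x = \sum(\epsilon_g a_i) b_i = x$ and $x \epsilon_g = \sum a_i (b_i \epsilon_g) = x$, the latter using $b_i \epsilon_g = b_i$ from the hypothesis applied with $g^{-1}$ in the role of $g$. Hence $\epsilon_g$ is a two-sided identity of $S_g S_{g^{-1}}$.

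For (b), the same strategy works with unitality replaced by $s$-unitality. In the forward direction, given $s = \sum_i a_i b_i c_i$, one applies Tominaga's theorem to the finite set $\{a_i b_i\} \subseteq S_g S_{g^{-1}}$ to produce a single element $\epsilon_g(s) \in S_g S_{g^{-1}}$ acting as a left $s$-unit on every $a_i b_i$, whence $\epsilon_g(s) s = s$; the element $\epsilon_g'(s) \in S_{g^{-1}} S_g$ is constructed analogously. Conversely, symmetric grading follows exactly as in (a), and to prove $s$-unitality of $S_g S_{g^{-1}}$ it suffices (by Tominaga again) to check that each $x = \sum_i a_i b_i \in S_g S_{g^{-1}}$ lies in $x(S_g S_{g^{-1}}) \cap (S_g S_{g^{-1}})x$. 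For a single summand $ab$, the element $\epsilon_g(a)$ is a left unit and $\epsilon_{g^{-1}}'(b) \in S_g S_{g^{-1}}$ is a right unit; one then combines these finitely many local units (again by Tominaga applied inside $S_g S_{g^{-1}}$) to cover the whole sum.

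The main obstacle I anticipate is the backward direction of (b): exhibiting a common local unit for a finite sum $\sum_i a_i b_i$ rather than for a single summand. The hypothesis only produces $\epsilon_g(s)$ for homogeneous $s \in S_g$, so the real work is to bootstrap from individual local units to finite-sum local units within $S_g S_{g^{-1}}$. This is precisely the content of Tominaga's theorem, which I would invoke rather than reprove.
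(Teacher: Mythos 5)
Your proof is correct. Note, however, that the paper itself offers no proof of this proposition: it is quoted verbatim from Nystedt--\"Oinert (\cite[Prop.~7, Prop.~10]{nystedt2017epsilon}), so there is no in-paper argument to compare against. Your reconstruction follows the standard route: the forward implications decompose $s\in S_g=S_gS_{g^{-1}}S_g$ as $\sum_i a_ib_ic_i$ and let the (global or local) unit of $S_gS_{g^{-1}}$ act through the factors $a_ib_i$, while the converses recover $S_g\subseteq S_gS_{g^{-1}}S_g$ from $s=\epsilon_g s$ and then verify the unit property of $S_gS_{g^{-1}}$ on finite sums $\sum_i a_ib_i$ by letting the $\epsilon$'s act on the homogeneous factors $a_i\in S_g$ and $b_i\in S_{g^{-1}}$ separately. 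The one step that genuinely needs a tool, passing from a local unit for each summand to a single local unit for a finite sum in part (b), is correctly delegated to Tominaga's theorem, which the paper records and proves as Lemma~\ref{lem:tominaga}; your only slip is rhetorical, namely attributing to Tominaga the reduction to the condition $x\in x(S_gS_{g^{-1}})\cap(S_gS_{g^{-1}})x$, which is just the definition of $s$-unitality, but this does not affect the argument.
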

For unital rings, the class of strongly graded rings is a subclass of epsilon-strongly graded rings. However, for general non-unital rings, this is not the case.

\begin{proposition}
A unital strongly $G$-graded ring $S=\bigoplus_{g \in G}S_g$ is epsilon-strongly $G$-graded.
\end{proposition}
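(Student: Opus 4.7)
The plan is to verify the characterization in Proposition \ref{prop:epsilon_char}(a) directly, with the natural choice $\epsilon_g = 1_S$ for every $g \in G$. The statement of epsilon-strongness we must establish has two parts: (i) the grading is symmetric, i.e.\ $S_g S_{g^{-1}} S_g = S_g$ for all $g$, and (ii) for every $g \in G$, there exists $\epsilon_g \in S_g S_{g^{-1}}$ acting as a two-sided identity on $S_g$ (in the sense of Proposition \ref{prop:epsilon_char}(a)).

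First I would observe that in any unital $G$-graded ring, the identity lies in the principal component. Writing $1_S = \sum_{g \in G} u_g$ with $u_g \in S_g$, and applying $1_S \cdot s = s$ (resp.\ $s \cdot 1_S = s$) to an arbitrary homogeneous element $s \in S_h$, one reads off from the direct sum decomposition that $u_e \cdot s = s$ and $u_g \cdot s = 0$ for $g \neq e$, so $1_S = u_e \in S_e$ and $S_e$ is a unital subring with identity $1_S$.

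Next, strongness gives $S_g S_{g^{-1}} = S_e$ for every $g \in G$. Combined with $1_S \in S_e$, symmetry is immediate:
\begin{equation*}
S_g S_{g^{-1}} S_g = S_e S_g = S_g,
\end{equation*}
since $S_e S_g \subseteq S_g$ and $s = 1_S \cdot s \in S_e S_g$ for every $s \in S_g$. Now simply set $\epsilon_g := 1_S \in S_e = S_g S_{g^{-1}}$ for each $g \in G$. Then for any $s \in S_g$ we have $\epsilon_g s = 1_S \cdot s = s$ and $s \epsilon_{g^{-1}} = s \cdot 1_S = s$, which is exactly the characterization in Proposition \ref{prop:epsilon_char}(a). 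Applying that proposition concludes the argument.

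There is no real obstacle here; the whole content is the observation that strongness supplies $S_g S_{g^{-1}} = S_e$ and unitality puts $1_S$ inside this principal component, so the constant choice $\epsilon_g = 1_S$ already satisfies every requirement of Definition \ref{def:nystedt-gradings}(a). The only subtlety worth stating explicitly in the write-up is that $1_S$ does in fact lie in $S_e$, since this is what places $\epsilon_g$ in the required subspace $S_g S_{g^{-1}}$.
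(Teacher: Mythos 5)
Your proof is correct and follows essentially the same route as the paper: both arguments come down to observing that strongness gives $S_g S_{g^{-1}} = S_e$ and that unitality forces $1_S \in S_e$, so that $\epsilon_g = 1_S$ works for every $g$. The paper states this in one line via Definition \ref{def:nystedt-gradings}(a); you merely spell out the (standard, and worth recording) detail that the identity of a unital graded ring lies in the principal component.
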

\begin{proof}
Note that $S_g S_{g^{-1}} =S_e$ is a unital ring for every $g \in G$.
\end{proof}

\begin{example}
(Example of a strongly graded ring that is not epsilon-strongly graded)

Let $R$ be an idempotent ring without multiplicative identity and consider $R$ graded by the trivial group by putting $S_e := R$. Since $S_e S_e = R R = R = S_e$, the grading is strong. On the other hand, $S_e S_e =R$ is not unital. Hence, by definition, the grading cannot be epsilon-strong.
\end{example}
In fact, it turns out that every epsilon-strongly $G$-graded ring is unital. 
For this purpose, we recall that if $R$ is a ring with a left multiplicative identity element $\epsilon$ and a right multiplicative identity element $\epsilon'$, then $\epsilon=\epsilon'$ is a multiplicative identity element of $R$. 
\begin{proposition}
If $S$ is an epsilon-strongly $G$-graded ring, then $S$ is unital with multiplicative identity element $\epsilon_e$. In that case, $S_e$ is a unital subring of $S$.
\label{prop:epsilon_unital}
\end{proposition}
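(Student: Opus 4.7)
The plan is to show that the element $\epsilon_e$ supplied by the epsilon-strong condition at $g=e$ already serves as a two-sided identity for all of $S$, not just for the principal component $S_e$. The key observation is that every $\epsilon_g$ lives inside $S_e$, so $\epsilon_e$ absorbs it on either side.

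First I would specialize the defining condition of an epsilon-strong grading (Proposition \ref{prop:epsilon_char}(a)) to $g = e$: this produces $\epsilon_e \in S_e S_e \subseteq S_e$ with $\epsilon_e t = t = t\epsilon_e$ for every $t \in S_e$, so $\epsilon_e$ is the multiplicative identity of the subring $S_e$. Next, for an arbitrary $g \in G$, both $\epsilon_g \in S_g S_{g^{-1}}$ and $\epsilon_{g^{-1}} \in S_{g^{-1}} S_g$ lie in $S_e$, whence the previous step yields $\epsilon_e \epsilon_g = \epsilon_g$ and $\epsilon_{g^{-1}} \epsilon_e = \epsilon_{g^{-1}}$.

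Now take an arbitrary homogeneous element $s_g \in S_g$. Using associativity together with the epsilon-strong identities $\epsilon_g s_g = s_g$ and $s_g \epsilon_{g^{-1}} = s_g$, one computes
\begin{equation*}
\epsilon_e s_g = \epsilon_e (\epsilon_g s_g) = (\epsilon_e \epsilon_g) s_g = \epsilon_g s_g = s_g,
\end{equation*}
and, symmetrically,
\begin{equation*}
s_g \epsilon_e = (s_g \epsilon_{g^{-1}}) \epsilon_e = s_g (\epsilon_{g^{-1}} \epsilon_e) = s_g \epsilon_{g^{-1}} = s_g.
\end{equation*}
Since every element of $S$ is a finite sum of homogeneous elements, extending these identities additively gives $\epsilon_e s = s = s \epsilon_e$ for all $s \in S$. (Alternatively, one could invoke the lemma recalled just before the statement: the calculation above produces both a left and a right multiplicative identity for $S$, which must then coincide with $\epsilon_e$.) Therefore $S$ is unital with identity $\epsilon_e$, and consequently $S_e$ is a unital subring of $S$ sharing the same identity. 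There is no real obstacle here; the only point worth isolating is the inclusion $\epsilon_g \in S_e$, which is what makes the reduction from arbitrary homogeneous components back to the principal component work.
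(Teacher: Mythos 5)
Your proof is correct and follows essentially the same route as the paper's: identify $\epsilon_e$ as the identity of $S_e$, use $\epsilon_g, \epsilon_{g^{-1}} \in S_e$ to get $\epsilon_e s_g = \epsilon_e(\epsilon_g s_g) = s_g$ and its mirror image, then extend additively. The only cosmetic difference is that the paper explicitly notes the a priori distinct left and right identities $\epsilon_e, \epsilon_e'$ of $S_e$ must coincide, a point you address in your parenthetical remark.
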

\begin{proof}
Let $R$ denote the principal component of $S$. By Proposition \ref{prop:epsilon_char}, there are two, a priori distinct, elements $\epsilon_e, \epsilon_e' \in S_e^2 \subseteq R$ such that $\epsilon_e r = r = r \epsilon_e'$ for any $r \in R$. This means that $\epsilon_e = \epsilon_e'$ is a multiplicative identity element of $R$. Take an arbitrary element $g \in G$. There are $\epsilon_g \in S_g S_{g^{-1}} \subseteq R$ and $\epsilon_g' \in S_{g^{-1}} S_g \subseteq R$ such that $\epsilon_g s_g = s_g = s_g \epsilon_g'$ for all $s_g \in S_g$. Fix an arbitrary $s_g \in S_g$. Then, 
\begin{equation*}
\epsilon_e s_g = \epsilon_e  (\epsilon_g s_g) = (\epsilon_e \epsilon_g) s_g = \epsilon_g s_g = s_g,
\end{equation*} 
and similarly, $s_g \epsilon_e = s_g$. Since a general element $s \in S$ is a finite sum $s=\sum s_g$ of elements $s_g \in S_g$, it follows that $\epsilon_e$ is a multiplicative identity element of $S$. 
\end{proof}

\begin{remark}
Note that by Proposition \ref{prop:epsilon_unital}, only unital rings admit an epsilon-strong grading. However, there are a lot of virtually epsilon-strongly graded rings which are not unital. In the next section we will give an example of such a ring (Example \ref{ex:derp22}).
\end{remark}

For the remainder of this section, we briefly consider gradings of the factor ring $S/I$. If $S$ is $G$-graded, then $S/I$ inherits a $G$-grading for certain ideals $I$. More precisely, let $G$ be an arbitrary group and let $S=\bigoplus_{g \in G} S_g$ be a $G$-graded ring. Recall that an ideal $I$ of $S$ is called \emph{homogeneous} (or graded) if $I = \bigoplus_{g \in G} (I \cap S_g)$. If $I$ is a homogeneous ideal, then the factor ring is naturally $G$-graded by,
\begin{equation}
S/I = \bigoplus_{g \in G} S_g / (I \cap S_g) = \bigoplus_{g \in G} (S_g + I)/I.
\label{eq:factor_grading}
\end{equation}
We will show that if $S$ is epsilon-strongly $G$-graded, then $S/I$ is epsilon-strongly $G$-graded.

\begin{lemma}
Let $\phi \colon A \to B$ be a $G$-graded epimorphism of $G$-graded rings $A$ and $B$. If $A$ is epsilon-strongly $G$-graded, then $B$ is epsilon-strongly $G$-graded.
\label{lem:graded_epi}
\end{lemma}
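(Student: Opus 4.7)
The plan is to verify the criterion of Proposition \ref{prop:epsilon_char}(a) for $B$ by pushing the $\epsilon$-elements of $A$ forward along $\phi$. As a preliminary step, I would observe that ``epimorphism'' here means a surjective $G$-graded homomorphism (this is the version needed to conclude in particular the applicability to the factor ring grading (\ref{eq:factor_grading})). Under this reading, I first check that $\phi(A_g) = B_g$ for every $g \in G$. The inclusion $\phi(A_g) \subseteq B_g$ is immediate from $\phi$ being graded. Conversely, given $b \in B_g$, pick $a \in A$ with $\phi(a) = b$ and decompose $a = \sum_{h \in G} a_h$ into its homogeneous components; comparing the unique decomposition $b = \sum_{h \in G} \phi(a_h)$ in $B = \bigoplus_h B_h$ with $b \in B_g$ forces $b = \phi(a_g) \in \phi(A_g)$.

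Next, for each $g \in G$, let $\epsilon_g^A \in A_g A_{g^{-1}}$ be an element as in Proposition \ref{prop:epsilon_char}(a), provided by the epsilon-strongness of $A$. I would define
\begin{equation*}
\epsilon_g^B := \phi(\epsilon_g^A).
\end{equation*}
Since $\phi$ is a ring homomorphism and $\phi(A_g) = B_g$, $\phi(A_{g^{-1}}) = B_{g^{-1}}$, we get $\epsilon_g^B \in \phi(A_g A_{g^{-1}}) \subseteq B_g B_{g^{-1}}$, which is the required containment.

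It remains to verify the identities $\epsilon_g^B b = b = b \epsilon_{g^{-1}}^B$ for every $b \in B_g$. Given such a $b$, the first step supplies some $a \in A_g$ with $\phi(a) = b$. Applying $\phi$ to the identities $\epsilon_g^A a = a = a \epsilon_{g^{-1}}^A$ that hold in $A$ yields at once $\epsilon_g^B b = \phi(\epsilon_g^A a) = \phi(a) = b$ and similarly $b \epsilon_{g^{-1}}^B = b$. Proposition \ref{prop:epsilon_char}(a) then concludes that $B$ is epsilon-strongly $G$-graded.

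The only real point requiring care is the interpretation of ``epimorphism'': in the category of rings, epimorphisms need not be surjective, but the argument only uses that $\phi(A_g) = B_g$ for each $g \in G$, which does follow once $\phi$ is assumed surjective (and indeed without surjectivity the statement would fail, as the inclusion of an idempotent ring without $1$ into its unitalization shows). Apart from this interpretive issue, the proof is a direct transport of structure along $\phi$.
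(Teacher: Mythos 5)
Your proof is correct and follows essentially the same route as the paper: both establish $\phi(A_g)=B_g$ from surjectivity and push the elements $\epsilon_g$ forward along $\phi$, the only cosmetic difference being that you verify the element-wise criterion of Proposition \ref{prop:epsilon_char}(a) while the paper applies $\phi$ to the set-level identities $A_gA_{g^{-1}}A_g=A_g$ and $A_gA_{g^{-1}}=\epsilon_gA_e$. Your explicit justification of $\phi(A_g)=B_g$ and the remark that ``epimorphism'' must be read as surjective are details the paper leaves implicit.
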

\begin{proof}
Suppose $A = \bigoplus_{g \in G} A_g$ and $B= \bigoplus_{g \in G} B_g$. For every $g \in G$, let $\epsilon_g$ be the multiplicative identity element of $A_g A_{g^{-1}}$. Using that $A$ is epsilon-strongly $G$-graded, we have that $A_g A_{g^{-1}} A_g = A_g$ and $A_g A_{g^{-1}} = \epsilon_g A_e$ for every $g \in G$. Since $\phi$ is a $G$-graded epimorphism, we have that $\phi(A_g)=B_g$ for every $g \in G$. Applying $\phi$ to both equations we get $B_g B_{g^{-1}} B_g = B_g$ and $B_g B_{g^{-1}} = \phi(\epsilon_g) B_e$ for every $g \in G$. Hence, $B$ is epsilon-strongly $G$-graded. 
\end{proof}

\begin{proposition}
Let $S$ be an epsilon-strongly $G$-graded ring. If $I$ is a homogeneous ideal of $S$, then the natural $G$-grading of $S/I$ is epsilon-strong.
\end{proposition}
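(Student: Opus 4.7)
The plan is to invoke Lemma \ref{lem:graded_epi} with $A = S$, $B = S/I$, and $\phi$ the canonical projection $\pi \colon S \to S/I$. So the proof reduces to verifying that $\pi$ is a $G$-graded epimorphism with respect to the grading on $S/I$ described in (\ref{eq:factor_grading}).

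First I would note that $\pi$ is obviously a ring epimorphism. The only thing to check is that it is $G$-graded, i.e. $\pi(S_g) \subseteq (S_g + I)/I$ for every $g \in G$. But this is immediate from the definition of $\pi$: any $s_g \in S_g$ is mapped to $s_g + I \in (S_g + I)/I$. In fact $\pi$ restricts to a surjection $S_g \twoheadrightarrow (S_g + I)/I$, which is what the hypotheses of Lemma \ref{lem:graded_epi} require (the lemma's proof used $\phi(A_g) = B_g$).

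Having established that $\pi$ is a $G$-graded epimorphism, Lemma \ref{lem:graded_epi} applies directly and yields that $S/I$ is epsilon-strongly $G$-graded, completing the proof. There is no genuine obstacle here; the statement is essentially a corollary of Lemma \ref{lem:graded_epi}, and the only substantive content is the observation that the natural $G$-grading on $S/I$ given by (\ref{eq:factor_grading}) is precisely the one that makes the canonical projection a $G$-graded map.
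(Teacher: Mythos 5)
Your proposal is correct and follows exactly the paper's own argument: the paper likewise reduces the statement to Lemma \ref{lem:graded_epi} by observing that the canonical projection $\pi \colon S \to S/I$ is a $G$-graded epimorphism with respect to the grading (\ref{eq:factor_grading}). The extra detail you supply (that $\pi(S_g) = (S_g+I)/I$) is the same verification the paper leaves implicit.
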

\begin{proof}
Follows by Lemma \ref{lem:graded_epi} since the natural epimorphism  $\pi \colon S \to S/I$ is $G$-graded.
\end{proof}

\section{Leavitt path algebras}
\label{sec:3}

In this section, we will show that the Leavitt path algebra associated to any directed graph is virtually epsilon-strongly graded. Let $R$ be an arbitrary unital ring and let $E=(E^0, E^1, s, r)$ be a directed graph. Here, $E^0$ denotes the vertex set, $E^1$ denotes the set of edges and the maps $s \colon E^1 \to E^0, r \colon E^1 \to E^0$ specify the \emph{source} and the \emph{range}, respectively, of each edge $f \in E^1$. The Leavitt path algebra $L_R(E)$ of $E$ with coefficients in $R$ is an algebraic analogue of the graph $C^*$-algebra associated to $E$. For more details about Leavitt path algebras, we refer the reader to the monograph by Abrams, Ara, and Siles Molina \cite{abrams2017leavitt}. In the case of $R$ being a field, Leavitt path algebras were first considered by Ara, Moreno and Pardo \cite{ara2007nonstable} and Abrams and Aranda Pino in \cite{abrams2005leavitt}. Later, Tomforde \cite{tomforde2011leavitt} considered Leavitt path algebras with coefficients in a commutative ring. We follow Hazrat \cite{hazrat2013graded} and let $R$ be a general (possibly non-commutative) unital ring.
 
\begin{definition}
For a directed graph $E= (E^0, E^1, s, r)$ and a unital ring $R$, the \textit{Leavitt path algebra with coefficients in $R$} is the $R$-algebra $L_R(E)$ generated by the symbols $\{ v \mid v \in E^0 \}$, $\{ f \mid f \in E^1 \}$ and $\{ f^* \mid f \in E^1 \}$ subject to the following relations:
\begin{enumerate}[(a)]
\begin{item}
$ u v = \delta_{u,v} u$ for all $u, v \in E^0$,
\end{item}
\begin{item}
$s(f) f = f r(f)=f$ and $r(f)f^* = f^*s(f)=f^*$  for all $f \in E^1$,
\end{item}
\begin{item}
$f^* f' = \delta_{f, f'} r(f)$ for all $f, f' \in E^1$,
\end{item}
\begin{item}
$\sum_{f \in E^1, s(f)=v}f f^* =v $ for all $v \in E^0$ for which $s^{-1}(v)$ is non-empty and finite. 
\end{item}
\end{enumerate}
We let $R$ commute with the generators. 
\label{def:lpa}
\end{definition}

A \emph{path} is a sequence of edges $\alpha=f_1 f_2 \dots f_n$ such that $r(f_i)=s(f_{i+1})$ for $1 \leq i \leq n-1$. We write $s(\alpha) = s(f_1)$ and $r(\alpha)=r(f_n)$. Using the relations in Definition \ref{def:lpa}, it can be shown that a general element of $L_R(E)$ has the form of a finite sum $\sum r_i \alpha_i \beta_i^*$ where $r_i \in R$, $\alpha_i$ and $\beta_i$ are paths such that $r(\alpha_i)=s(\beta_i^*)=r(\beta_i)$ for every $i$. There is an anti-graded involution on $L_R(E)$ defined by $f \mapsto f^*$ for every $f \in E^1$. The image of a path $\alpha = f_1 f_2 \dots f_n$ under the involution is $\alpha^* = f_n^* f_{n-1}^* \dots f_1^*$. Note that for any elements $\alpha, \beta \in L_R(E)$ we have that $(\alpha \beta)^* = \beta^* \alpha^*$.

Next, we recall (see e.g. \cite[Sect. 4]{nystedt2017epsilon}) a process to assign a $G$-grading to $L_R(E)$ for an arbitrary group $G$. Let $F_R(E) = R \langle v, f, f^* \mid v \in E^0, f \in E^1 \rangle$ denote the free $R$-algebra generated by all symbols of the form $v, f, f^*$. Put $\text{deg}(v) = e$ for each $v \in E^0$. For every $f \in E^1$, choose a $g \in G$ and put $\text{deg}(f) = g$ and $\text{deg}(f^*)=g^{-1}$ This extends to a $G$-grading of $F_R(E)$ in the obvious way. Next, let $J$ be the ideal generated by the relations (a)-(d) in Definition \ref{def:lpa}. It is easy to check that $J$ is homogeneous and thus the factor algebra $L_R(E)=F_R(E)/J$ is $G$-graded by the factor $G$-grading (see (\ref{eq:factor_grading})). This $G$-grading is called a \emph{standard $G$-grading} of $G$. Note that this construction depends on which elements $g \in G$ we assign to the generators. In the special case of $G = \mathbb{Z}$, the natural choice is to put $\deg(f)=1$ and $\deg(f^*)=-1$ for all $f \in E^1$. In this special case, $\text{deg}(\alpha)=\text{len}(\alpha)$ for any real path $\alpha$. The resulting grading is called the \emph{canonical $\mathbb{Z}$-grading} of $L_R(E)$. For a general standard $G$-grading, the homogeneous component of degree $h \in G$ can be expressed as,  
\begin{equation*}
 (L_R(E))_h = \text{Span}_R \{ \alpha \beta^* \mid \text{deg}(\alpha) \text{deg}(\beta)^{-1} = h, r(\alpha)=r(\beta) \}.
\label{eq:2}
\end{equation*}

The canonical $\mathbb{Z}$-grading was investigated by Hazrat \cite{hazrat2013graded}. Among other results, he gave a criterion on the finite graph $E$ for the Leavitt path algebra $L_R(E)$ to be strongly $\mathbb{Z}$-graded (see \cite[Thm. 3.11]{hazrat2013graded}). Continuing the investigation of the group graded structure of Leavitt path algebras, Nystedt and Öinert introduced the class of nearly epsilon-strongly graded rings and proved the following.

\begin{proposition}(\cite[Thm. 28]{nystedt2017epsilon}) Let $G$ be an arbitrary group, let
$R$ be a unital ring and let $E$ be a directed graph. Then any standard $G$-grading of $L_R(E)$ is nearly epsilon-strong. Note that, in particular, $L_R(E)$ is symmetrically $G$-graded.
\label{prop:lpa_nearly}
\end{proposition}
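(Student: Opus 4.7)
The plan is to verify the characterization of nearly epsilon-strong gradings in Proposition \ref{prop:epsilon_char}(b): for each $g \in G$ and each $s \in S_g := (L_R(E))_g$, I must exhibit $\epsilon_g(s) \in S_g S_{g^{-1}}$ with $\epsilon_g(s) s = s$ and, dually, $\epsilon_g'(s) \in S_{g^{-1}} S_g$ with $s \epsilon_g'(s) = s$. Symmetric gradedness will then fall out of the same construction.

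First I would handle the monomial case. The preliminary identity $\alpha^* \alpha = r(\alpha)$, valid for any real path $\alpha$, follows by induction on $\mathrm{len}(\alpha)$ from relations (b) and (c) of Definition \ref{def:lpa}. For a monomial $x = r\alpha\beta^* \in S_g$ (so $\deg(\alpha)\deg(\beta)^{-1} = g$ and $r(\alpha)=r(\beta)$), this identity yields the factorization $\alpha\alpha^* = (\alpha\beta^*)(\beta\alpha^*) \in S_g S_{g^{-1}}$, and the calculation $(\alpha\alpha^*) \cdot r\alpha\beta^* = r\alpha\, r(\alpha)\, \beta^* = x$ shows it is a left unit for $x$. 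Symmetrically, $\beta\beta^* = (\beta\alpha^*)(\alpha\beta^*) \in S_{g^{-1}} S_g$ right-unitizes $x$, and the three-factor identity $(\alpha\beta^*)(\beta\alpha^*)(\alpha\beta^*) = \alpha\beta^*$ places $x$ in $S_g S_{g^{-1}} S_g$. By $R$-linearity and the fact that such monomials span $S_g$, one obtains $S_g = S_g S_{g^{-1}} S_g$, so $L_R(E)$ is symmetrically $G$-graded; a convenient consequence is that $S_g S_{g^{-1}}$ is then a subring of $S_e$.

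For a general element $s = \sum_{i=1}^n r_i \alpha_i \beta_i^* \in S_g$, set $p_i := \alpha_i \alpha_i^* \in S_g S_{g^{-1}}$. The hinge of the proof is that the $p_i$'s pairwise commute. A short case analysis on $\alpha_i^* \alpha_j$ (which the Leavitt relations force to be either $0$, or a real path $\gamma$ with $\alpha_j = \alpha_i\gamma$, or a ghost path $\gamma^*$ with $\alpha_i = \alpha_j \gamma$) shows in each case that $p_i p_j \in \{0, p_i, p_j\}$ and that $p_j p_i$ produces the same value. With commutativity established, the join $\epsilon := \bigvee_{i=1}^n p_i$ exists and, as a finite alternating sum of products of the $p_i$'s via inclusion-exclusion, lies in the subring $S_g S_{g^{-1}}$. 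Since $p_i \leq \epsilon$ forces $\epsilon p_i = p_i$, associativity gives
\begin{equation*}
\epsilon \cdot (r_i \alpha_i \beta_i^*) \;=\; \epsilon \cdot p_i \cdot (r_i \alpha_i \beta_i^*) \;=\; p_i \cdot (r_i \alpha_i \beta_i^*) \;=\; r_i \alpha_i \beta_i^*,
\end{equation*}
and summing over $i$ yields $\epsilon s = s$. A mirror construction with $q_i := \beta_i \beta_i^* \in S_{g^{-1}} S_g$ produces the required right unit.

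The main obstacle is the pairwise commutativity of the $p_i$'s; this is the single place where the combinatorics of the Leavitt relations really has to be unpacked, and it is precisely what permits the passage from the well-behaved monomial case to a finite sum through the lattice operation $\bigvee$. Everything else, including symmetric gradedness, the subring property of $S_g S_{g^{-1}}$, and the reduction of the identity $\epsilon s = s$ to the pointwise identities $\epsilon p_i = p_i$, is routine algebraic bookkeeping once that commutativity is in place.
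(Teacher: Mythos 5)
Your proof is correct. Note first that the paper does not actually prove this proposition: it is imported verbatim from \cite[Thm.~28]{nystedt2017epsilon}, so there is no in-paper argument to match against. That said, your argument is sound and is essentially the same machinery the paper deploys for the \emph{stronger} statement, Proposition \ref{prop:lpa_is_virtually} (virtual epsilon-strongness): the key identity you isolate, that $\alpha\alpha^*$ and $\beta\beta^*$ commute with product in $\{0,\alpha\alpha^*,\beta\beta^*\}$ according to the initial-subpath relation, is exactly the paper's equation (\ref{eq:1}). The difference is one of economy: because $s$-unitality only demands an element-wise unit, you may take the finite join of the specific idempotents $p_i=\alpha_i\alpha_i^*$ attached to the monomials of a given $s$ (which lies in the subring $S_gS_{g^{-1}}$ by inclusion--exclusion), whereas the paper, aiming at enough idempotents, must pass to the \emph{minimal} paths and a globally defined orthogonal family $M$ with the auxiliary map $\mu$. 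Your route buys a shorter, self-contained verification of the cited result and of symmetric gradedness (which the paper's proof of Proposition \ref{prop:lpa_is_virtually} simply quotes from Proposition \ref{prop:lpa_nearly}); the paper's heavier construction buys the stronger conclusion. All the individual steps you flag as routine (the identity $\alpha^*\alpha=r(\alpha)$, the factorization $\alpha\alpha^*=(\alpha\beta^*)(\beta\alpha^*)$, closure of $S_gS_{g^{-1}}$ under multiplication, and $\epsilon p_i=p_i$ for the join of commuting idempotents) do check out.
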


Having introduced the notion of virtually epsilon-strongly graded rings, the author of the present paper realized that Proposition \ref{prop:lpa_nearly} could be made more precise. We recall that $L_R(E)$ is a ring with enough idempotents (see e.g. \cite[Lem. 1.2.12(v)]{abrams2017leavitt}). In line with this property, it turns out that any standard $G$-grading of $L_R(E)$ is virtually epsilon-strong. 

\begin{proposition} Let $G$ be an arbitrary group.
If $R$ is a unital ring and $E$ is any graph, then every standard $G$-grading of $L_R(E)$ is virtually epsilon-strong.
\label{prop:lpa_is_virtually}
\end{proposition}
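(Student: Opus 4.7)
The plan is to verify the conditions of Definition \ref{def:nystedt-gradings}(b). Since Proposition \ref{prop:lpa_nearly} already gives that any standard $G$-grading of $S := L_R(E)$ is nearly epsilon-strong (and therefore symmetric), it remains to establish, for each $g \in G$, that the ideal $S_g S_{g^{-1}}$ of $S_e$ is a ring with enough idempotents, rather than merely being $s$-unital.

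The starting observation is that for every real path $\alpha$ with $\deg(\alpha) = g$, the Leavitt relations $\alpha^* \alpha = r(\alpha)$ and $\alpha \cdot r(\alpha) = \alpha$ ensure that $e_\alpha := \alpha \alpha^* \in S_g S_{g^{-1}}$ is a non-zero idempotent. Moreover, any two such idempotents $e_\alpha, e_{\alpha'}$ commute: a short case analysis shows their product is $0$ when neither path is a prefix of the other, and equals $e_{\alpha'}$ when $\alpha$ is a prefix of $\alpha'$. For any $s \in S_g S_{g^{-1}}$, I would write $s$ as a finite sum of monomials $r_i (\alpha_i \beta_i^*)(\gamma_i \delta_i^*)$ with $\alpha_i \beta_i^* \in S_g$ and $\gamma_i \delta_i^* \in S_{g^{-1}}$, and observe that $\alpha_i \alpha_i^* = (\alpha_i \beta_i^*)(\alpha_i \beta_i^*)^*$ and $\delta_i \delta_i^* = (\gamma_i \delta_i^*)^*(\gamma_i \delta_i^*)$ both lie in $S_g S_{g^{-1}}$ and serve as left and right local units for the $i$-th summand, respectively. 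Consequently, the finite join $\epsilon_s := \bigvee_i (\alpha_i \alpha_i^* \vee \delta_i \delta_i^*)$ is an idempotent of $S_g S_{g^{-1}}$ acting as a two-sided local unit for $s$.

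To upgrade this collection of commuting local units into a set $F_g$ of \emph{pairwise orthogonal} commuting idempotents (as required by the definition of enough idempotents), I would exploit the fact that any finite commutative ring generated by commuting idempotents decomposes as a finite direct sum of pairwise orthogonal atomic idempotents via a Stone-type argument. Applied to finite subrings generated by finitely many of the $e_\mu$, and compiled into a compatible family (for instance via Zorn's lemma applied to the lattice of refinements, or by a direct maximal-antichain argument), this produces $F_g \subseteq E(S_g S_{g^{-1}})$ whose $\vee$-closure contains every $\epsilon_s$. The main obstacle will be ensuring that the chosen $F_g$ is sufficiently dense; this is delicate when the prefix order on degree-$g$ real paths admits no minimal elements (for instance, when every vertex of $E$ lies on a degree-$e$ cycle), and may require a careful construction of the atoms tailored to the interplay between sinks, infinite emitters, and degree-$e$ cycles in $E$.
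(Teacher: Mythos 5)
Your first two paragraphs match the paper's setup: by Proposition \ref{prop:lpa_nearly} the grading is symmetric, so everything reduces to showing that each $S_gS_{g^{-1}}$ has \emph{enough idempotents}, and the idempotents $\alpha\alpha^*$ together with their finite joins do supply commuting local units. The gap is in your third paragraph, which is exactly where ``nearly'' gets upgraded to ``virtually'': you never actually produce the pairwise orthogonal family $F_g$. A ``Stone-type'' atomic decomposition of the subring generated by finitely many commuting idempotents $e_\mu$ does exist, but the atoms refine strictly every time a longer path is adjoined (if $\alpha$ is a proper prefix of $\beta$ then $\beta\beta^*\leq\alpha\alpha^*$), so the finite-stage atoms need not stabilize, and Zorn's lemma applied to ``the lattice of refinements'' has no evident maximal compatible family; you acknowledge this unresolved density problem yourself. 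Moreover, the obstacle you flag is illusory: the prefix order on paths is well-founded, since a strictly descending chain strictly decreases path length, so every element of $A=\{\alpha \mid \alpha\beta^*\in S_gS_{g^{-1}}\}$ lies above a unique minimal element of $A$; degree-$e$ cycles produce incomparable extensions, not infinite descending chains.

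The paper's resolution is precisely to take $M=\{\alpha\alpha^*\mid \alpha \text{ minimal in }(A,\leq)\}$. Distinct minimal paths are prefix-incomparable, so by (\ref{eq:1}) the corresponding idempotents are pairwise orthogonal; each monomial $\alpha\beta^*\in S_gS_{g^{-1}}$ satisfies $(\delta\delta^*)(\alpha\beta^*)=\alpha\beta^*$ for the unique $\delta\delta^*\in M$ with $\delta\leq\alpha$, and the right-hand absorption follows by applying the involution to $s^*\in S_gS_{g^{-1}}$ and using $f^*=f$ for $f\in\bigvee M$. Note that the definition of enough idempotents only requires $\bigvee M$ to be a set of local units, not to contain your elements $\epsilon_s$; replacing each $\alpha\alpha^*$ by the larger idempotent $\delta\delta^*$ is what sidesteps the density issue. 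Without this (or an equivalent) explicit choice of $M$, your argument establishes only what Proposition \ref{prop:lpa_nearly} already gives.
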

\begin{proof}
To save space we denote $(L_R(E))_g$ by $S_g$. Since the standard $G$-grading is symmetric by Proposition \ref{prop:lpa_nearly}, it is enough to show that $S_g S_{g^{-1}}$ is a ring with enough idempotents for each $g \in G$.

Take $g \in G$. We will show that $S_g S_{g^{-1}}$ has enough idempotents by constructing a set $M$ of pairwise orthogonal, commuting idempotents of $S_g S_{g^{-1}}$ such that $E_g = \bigvee M$ is a set of local units for $S_g S_{g^{-1}}$. Let $A = \{ \alpha \mid \alpha \beta^* \in S_g S_{g^{-1}} \}$ and define a partial order of $A$ by letting $\alpha \leq \beta$ if and only if $\alpha$ is an initial subpath of $\beta$. Next, let, $$ M = \{ \alpha \alpha^* \mid \alpha \text{ minimal in } (A, \leq) \}.$$ By construction, if $\alpha \alpha^* \in M$ there exists some $\beta$ such that $\alpha \beta^* \in S_g S_{g^{-1}}$. Then, $\beta \alpha^* \in S_{g^{-1}}S_g$. Hence, $\alpha \alpha^* = (\alpha \beta^*) (\beta \alpha^*) \in (S_g S_{g^{-1}}) (S_{g^{-1}}S_g) \subseteq (S_g S_{g^{-1}}) S_e =  S_g S_{g^{-1}}$. Thus, $M \subseteq S_g S_{g^{-1}}$.

Recall (see \cite[Lem. 1.2.12]{abrams2017leavitt}) that the following equation holds for any paths $\gamma, \delta, \lambda, \rho$:
\begin{equation}
(\gamma \delta^*)(\lambda \rho^*) = \begin{cases}
\gamma \kappa \rho^* & \text{if } \lambda = \delta \kappa \text{ for some path } \kappa \\
\gamma \sigma^* \rho^* & \text{if } \delta = \lambda \sigma \text{ for some path } \sigma \\
0 & \text{otherwise}
\end{cases}
\label{eq:a1}
\end{equation}
In particular, for any paths $\alpha, \beta$:
\begin{equation}
(\alpha \alpha^*)(\beta \beta^*) = (\beta \beta^*)(\alpha \alpha^*) = \begin{cases}
\alpha \alpha^* & \alpha = \beta \\
\beta \beta^* & \alpha \leq \beta \\
\alpha \alpha^* & \beta \leq \alpha \\
0 & \text{otherwise}
\end{cases}
\label{eq:1}
\end{equation}
It follows from (\ref{eq:1}) that the set $M$ consists of pairwise orthogonal, commuting idempotents. 

Note that for each $\alpha \beta^* \in S_g S_{g^{-1}}$ there exists a unique element $\delta \delta^* \in M$ such that $\delta \leq \alpha$. Hence, we can define a function by $\mu(\alpha \beta^*)= \delta \delta^*$. Since $\alpha = \delta \delta'$ for some path $\delta'$, it follows by (\ref{eq:a1}) that $\mu(\alpha \beta^*) \alpha \beta^* = (\delta \delta^*)( \alpha \beta^*)=\delta \delta' \beta^* = \alpha \beta^*$ for any $\alpha \beta^* \in S_g S_{g^{-1}}$. On the other hand, let $\alpha_1 \beta_1^*, \alpha_2 \beta_2^* \in S_g S_{g^{-1}}$ such that, 
\begin{equation}
\delta_1 \delta_1^* = \mu(\alpha_1 \beta_1^*) \ne \mu(\alpha_2 \beta_2^*) = \delta_2 \delta_2^*,
\label{eq:a2}
\end{equation} for some paths $\delta_1, \delta_2$ satisfying $\delta_1 \leq \alpha_1$ and $\delta_2 \leq \alpha_2$. Note that $\delta_1 \nleq \alpha_2$ and $\delta_2 \not \leq \alpha_1$ as otherwise (\ref{eq:a2}) would not hold. Moreover, it follows by (\ref{eq:a2}) and the definition of $M$ that $\alpha_2   \nleq \delta_1 $ and $\alpha_1 \not \leq  \delta_2 $. Thus, by (\ref{eq:a1}), we have that $\mu(\alpha_1 \beta_1^*)  \alpha_2 \beta_2^* = (\delta_1 \delta_1^*) ( \alpha_2 \beta_2^*) = 0$ and similarly $ \mu(\alpha_2 \beta_2^*) \alpha_1 \beta_1^* = (\delta_2 \delta_2^*) (\alpha_1 \beta_1^*)  = 0$. 

Consider an arbitrary $s = \sum_{i \in I} r_i \alpha_i \beta_i^* \in S_g S_{g^{-1}}$ for some finite index set $I$. Let $J \subseteq I$ the subset of elements $\alpha_i \beta_i^*$ with unique images under the map $\mu$, i.e. for all $i \in I$ there is some $j \in J$ such that $\mu(\alpha_i \beta_i^*) = \mu(\alpha_j \beta_j^*)$. Put $e = \bigvee_{j \in J} \mu(\alpha_j \beta_j^*) = \sum_{j \in J} \mu(\alpha_j \beta_j^*)$. Then,
\begin{align*}
e s &= \Big ( \sum_{j \in J} \mu(\alpha_i \beta_i^*) \Big ) \Big ( \sum_{i \in I} r_i \alpha_i \beta_i^* \Big ) = \sum_{i \in I, j \in J} \mu(\alpha_j \beta_j^*) r_i \alpha_i \beta_i^* \\ &= \sum_{i \in I} \mu(\alpha_i \beta_i^*) r_i \alpha_i \beta_i^* =  \sum_{i \in I} r_i \mu(\alpha_i \beta_i^*)  \alpha_i \beta_i^* = \sum_{i \in I} r_i \alpha_i \beta_i^* = s.
\end{align*}
Note that $s^* = \sum_{i \in I} (\alpha_i \beta_i^*)^* = \sum_{i \in I}\beta_i \alpha_i^* \in S_g S_{g^{-1}}$. Hence, by the above argument there exists some element $f \in \bigvee M \subseteq S_g S_{g^{-1}}$ such that $f s^* = s^*$. But by construction $f= \sum \alpha \alpha^*$ for some finite sum, implying that $f^* = \sum (\alpha \alpha^*)^* = \sum \alpha \alpha^* = f$. Thus, $s = (s^*)^* = (f s^*)^* = (s^*)^* f^* = s f^* = s f$.

Finally, note that $e \vee f \in \bigvee M$ and $(e \vee f) s = s (e \vee f) = s$. Hence, $E_g = \bigvee M$ is a set of local units for $S_g S_{g^{-1}}$. 
\end{proof}

We end this section with two examples that distinguish the class of virtually epsilon-strongly graded rings from strongly graded rings and epsilon-strongly graded rings.

\begin{example}
(Virtually epsilon-strongly graded but not strongly graded)
Let $R$ be a unital ring and let $E$ be the finite graph in Figure \ref{fig:graph1}. Note that $L_R(E)$ is not strongly $\mathbb{Z}$-graded since $v_2$ is a sink (see \cite[Thm. 3.15]{hazrat2013graded}). However, $L_R(E)$ is epsilon-strongly $\mathbb{Z}$-graded since $E$ is finite (see \cite[Thm. 24]{nystedt2017epsilon}). Recall that epsilon-strongly graded implies virtually epsilon-strongly graded (see Remark \ref{rem:inclusions}(a)). Thus, $L_R(E)$ is virtually epsilon-strongly $\mathbb{Z}$-graded but not strongly $\mathbb{Z}$-graded.

\begin{figure}[h!]
\begin{tikzpicture}[scale=1.5]
\tikzset{vertex/.style = {shape=circle,draw,minimum size=1.5em}}
\tikzset{edge/.style = {->,> = latex'}}

\node[vertex] (a) at  (0,0) {$v_1$};
\node[vertex] (b) at  (1,0) {$v_2$};

\draw[edge] (a) to (b);
\end{tikzpicture}
\caption{Finite graph with a sink}
\label{fig:graph1}
\end{figure}
\end{example}

\begin{example}(Virtually epsilon-strongly graded but not epsilon-strongly graded)
Let $R$ be a unital ring and let $E$ be a graph consisting of infinitely many disjoint vertices, i.e. $E^0 = \{ v_n \mid n \geq 0 \}$ and $E^1 = \emptyset$ (see Figure \ref{fig:graph2}). Consider $L_R(E)$ with the canonical $\mathbb{Z}$-grading. Since $E^0$ is infinite, $(L_R(E))_0$ does not admit a multiplicative identity element. Hence, by Proposition \ref{prop:epsilon_unital}, $L_R(E)$ cannot be epsilon-strongly graded. Furthermore, for any integer $k$ such that $|k| > 0$, $(L_R(E))_k = \{ 0 \}$. Hence, $(L_R(E))_k (L_R(E))_{-k} = \{0 \} \ne (L_R(E))_0$ for $|k| > 0$ implying that $L_R(E)$ is not strongly $\mathbb{Z}$-graded. On the other hand, $L_R(E)$ is virtually epsilon-strongly $\mathbb{Z}$-graded by Proposition \ref{prop:lpa_is_virtually}.

\begin{figure}[h!]
\begin{tikzpicture}[scale=1.5]
\tikzset{vertex/.style = {shape=circle,draw,minimum size=1.5em}}
\tikzset{edge/.style = {->,> = latex'}}

\node[vertex] (a) at  (0,0) {$v_1$};
\node[vertex] (b) at  (1,0) {$v_2$};
\node[vertex] (c) at  (2,0) {$v_3$};
\node[vertex] (d) at  (3,0) {$v_4$};
\node[vertex] (e) at  (4,0) {$v_5$};
\node[vertex] (f) at  (5,0) {$v_6$};
\node[vertex] (g) at  (6,0) {$v_7$};

\node[vertex, draw=white] (h) at (7,0) {$...$};

\end{tikzpicture}
\caption{Discrete infinite graph}
\label{fig:graph2}
\end{figure}

\label{ex:derp22}
\end{example}

\section{Induced quotient group gradings of Nystedt-Öinert-Pinedo rings}
\label{sec:4}

In this section, we will derive our main results about the induced quotient group gradings of epsilon-strongly graded rings (see Theorem \ref{thm:main} and Theorem \ref{thm:main1}). This involves a systematic study of induced quotient group gradings of Nystedt-Öinert-Pinedo rings. Recall that for idempotents $e, f \in E(R)$ we write $e \leq f$ if and only if $e = ef = fe$, i.e. $e$ absorbs $f$. We will think about $e \leq f$ as expressing that $f$ can be used as a ``local unit'' in place of $e$. More precisely, we have the following.

\begin{lemma}
Let $e,f \in E(R)$ be idempotents of $R$. Then, $e \leq f$ is equivalent to the following: For any $x \in R$,  
\begin{enumerate}[(a)]
\begin{item}
$x=ex \implies x = f x$, and,  
\end{item}
\begin{item}
$x = x e \implies x = x f$. 
\end{item}
\end{enumerate}
\label{lem:idem}
\end{lemma}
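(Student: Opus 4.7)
The plan is to prove the two implications separately, using only the definition $e \leq f \iff e = ef = fe$ together with the fact that $e$ is idempotent.

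For the forward direction, assume $e \leq f$, i.e. $e = ef = fe$. For (a), suppose $x = ex$; then I would multiply on the left by $f$ and use associativity together with $fe = e$ to get $fx = f(ex) = (fe)x = ex = x$. For (b), suppose $x = xe$; a symmetric computation using $ef = e$ gives $xf = (xe)f = x(ef) = xe = x$. Both of these are one-line manipulations.

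For the backward direction, assume (a) and (b) hold for every $x \in R$. The key trick is to test the hypotheses on the element $x = e$ itself. Since $e$ is idempotent, $e = e \cdot e$, so $e$ satisfies both $x = ex$ and $x = xe$. Applying (a) to $x = e$ yields $e = fe$, and applying (b) to $x = e$ yields $e = ef$. Together these say exactly $e = ef = fe$, so $e \leq f$.

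There is no real obstacle here; the only thing to be careful about is making sure the substitution $x = e$ is legitimate, which it is because $e \in R$ and $e^2 = e$ verifies the premise of both (a) and (b). The lemma is essentially a restatement of the order $\leq$ in terms of how idempotents act as left/right units, and its purpose in the sequel is likely to let us pass from a ``local unit $e$ for $x$'' to a ``larger local unit $f$ for $x$'' without extra hypotheses, which is exactly what the two implications in the statement encode.
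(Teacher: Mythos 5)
Your proof is correct: both directions are the standard one-line computations, and testing the hypotheses on $x=e$ (legitimate since $e^2=e$) is exactly the right move for the converse. The paper states this lemma without proof, treating it as routine, and your argument is the natural one it leaves implicit.
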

%


\begin{proposition}
Let $R$ be a ring and let $E$ be a set of local units for $R$. Then $R$ is a unital ring if and only if there exists some $e' \in E(R)$ such that $e \leq e'$ for all $e \in E$. If such an element exists, then it is the multiplicative identity element of $R$.
\label{prop:greatest}
\end{proposition}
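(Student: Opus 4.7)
The plan is a direct two-way argument, using Lemma \ref{lem:idem} to translate the relation $e \leq e'$ into statements about absorption of local units.

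For the ``only if'' direction, I would simply take $e' = 1_R$, the multiplicative identity of $R$. Clearly $1_R \in E(R)$. For any $e \in E \subseteq E(R)$, we have $e \cdot 1_R = e = 1_R \cdot e$, so by the very definition of the partial order on $E(R)$ we get $e \leq 1_R$.

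For the ``if'' direction, suppose $e' \in E(R)$ satisfies $e \leq e'$ for every $e \in E$. I want to show that $e'$ acts as a two-sided multiplicative identity on $R$. Fix an arbitrary $x \in R$. Since $E$ is a set of local units, there exists $e \in E$ with $ex = x = xe$. By hypothesis $e \leq e'$, so Lemma \ref{lem:idem}(a) applied to $x = ex$ gives $x = e'x$, and Lemma \ref{lem:idem}(b) applied to $x = xe$ gives $x = xe'$. Hence $e'x = x = xe'$ for every $x \in R$, so $e'$ is a multiplicative identity of $R$, and in particular $R$ is unital.

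The uniqueness claim (``if such an element exists, then it is the multiplicative identity element of $R$'') follows from the standard observation that any two multiplicative identities of a ring coincide: if $e'_1, e'_2$ are both such upper bounds, then each is a multiplicative identity by the argument above, and then $e'_1 = e'_1 e'_2 = e'_2$. There is no real obstacle here; the only subtlety worth flagging is to be careful that Lemma \ref{lem:idem} is being invoked in both its left and right forms, which is exactly what the two-sidedness of the local unit $e$ for $x$ provides.
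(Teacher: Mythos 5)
Your proposal is correct and matches the paper's argument essentially verbatim: the forward direction takes $e'=1_R$, and the converse uses a local unit $e$ for a given $x\in R$ together with Lemma \ref{lem:idem} (in both its left and right forms) to upgrade $ex=x=xe$ to $e'x=x=xe'$. The uniqueness remark you add is standard and harmless.
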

\begin{proof}
Assume that $e'\in E(R)$ satisfies $e \leq e'$ for all $e \in E$. Let $x \in R$ be any element. Then, there is some $e_x \in E$ such that $x=e_x x = x e_x$. But by assumption, $e_x \leq e'$. By Lemma \ref{lem:idem}, $x=e'x=xe'$. Hence $e'$ is the multiplicative identity element of $R$.

Conversely, assume that $R$ is unital with multiplicative identity $1$. Then, $e \leq 1$ for all $e \in E$. 
\end{proof}

\begin{example}
Consider $R_1 := \bigoplus_{\mathbb{Z}} \mathbb{Q}$ and $R_2 := \prod_{\mathbb{Z}} \mathbb{Q}$. For $i \in \mathbb{Z}$, let $\delta_i \colon \mathbb{Z} \to \mathbb{Q}$ be the function such that for each integer $j$, $\delta_i(j) = \delta_{i,j}$ where $\delta_{i,j}$ is the Kronecker delta. Since $\delta_i$ has finite support, $\delta_i \in R_1 \subset R_2$ for each $i \in \mathbb{Z}$. It is easy to see that the set $E=\bigvee \{ \delta_i \mid i \in \mathbb{Z} \}$ is a set of local units for both $R_1$ and $R_2$. Note that $R_2$ is unital with multiplicative identity $1_{R_2} = \bigvee_{i \in \mathbb{Z}} \delta_i$ while $R_1$ is not unital.
\end{example}

The following characterization becomes very useful when $E$ is a finite set. 

\begin{corollary}
Let $R$ be a ring with a set of local units $E$. If $E$ is a finite set, then $(E,\leq)$ has a greatest element and thus $R$ is a unital ring.
\label{cor:finite} 
\end{corollary}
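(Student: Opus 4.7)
The plan is to reduce the statement directly to Proposition \ref{prop:greatest}. To apply that proposition, I need to produce an element $e' \in E(R)$ which is an upper bound for every $e \in E$ with respect to the partial order $\leq$. Once such an $e'$ is found, Proposition \ref{prop:greatest} immediately yields that $R$ is unital (with $e'$ as the multiplicative identity), and in particular $e'$ is then the greatest element of $(E,\leq)$.

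To construct $e'$, I will exploit the hypothesis that $E$ is $\vee$-closed together with the finiteness assumption. Enumerate $E = \{ e_1, e_2, \dots, e_n \}$ and define inductively $f_1 := e_1$ and $f_{k+1} := f_k \vee e_{k+1}$ for $1 \leq k \leq n-1$. Each such join exists and lies in $E$: indeed, the elements of $E$ pairwise commute, so by the formula $a \vee b = a + b - ab$ recalled in the introduction each $f_{k+1}$ is a well-defined idempotent, and $\vee$-closedness of $E$ guarantees $f_{k+1} \in E$. Setting $e' := f_n$, a straightforward induction shows $e_i \leq e'$ for every $i$; for instance, $e_i \leq f_i \leq f_{i+1} \leq \dots \leq f_n = e'$, where each step uses that $a \leq a \vee b$ (which holds since $a$ and $a \vee b$ are commuting idempotents with $a(a \vee b) = a(a+b-ab) = a + ab - ab = a$, and symmetrically on the other side).

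With $e' \in E \subseteq E(R)$ satisfying $e \leq e'$ for all $e \in E$, Proposition \ref{prop:greatest} applies and gives that $R$ is unital with multiplicative identity $e'$. Since $e'$ is both an upper bound for $(E,\leq)$ and itself a member of $E$, it is automatically the greatest element of $(E,\leq)$, which completes the argument. There is no serious obstacle here; the only point requiring care is to verify that the iterated join lands inside $E$, which is precisely guaranteed by the $\vee$-closedness built into the definition of a set of local units.
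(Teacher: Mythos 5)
Your proof is correct and follows essentially the same route as the paper: form the finite join $e_1 \vee \dots \vee e_n$, which exists and lies in $E$ by $\vee$-closedness, observe that it is the greatest element of $(E,\leq)$, and conclude via Proposition \ref{prop:greatest}. You merely spell out the induction and the verification that $a \leq a \vee b$ in more detail than the paper does.
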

\begin{proof}
Assume that $e_1, \dots, e_n$ are the elements of $E$. Then $e_1 \vee \dots \vee e_n \in E$ exists since it is a finite join. Furthermore, $e_1 \vee \dots \vee e_n \in R$ is the greatest element of $E$ with respect to the ordering $\leq$. Thus, $R$ is unital by Proposition \ref{prop:greatest}. 
\end{proof}

Before considering induced quotient group gradings of Nystedt-Öinert-Pinedo rings, we show that the functor $U_{G/N}$ restricts to the category of symmetrically $G$-graded rings.

\begin{proposition}
Let $G$ be an arbitrary group and let $N$ be a normal subgroup of $G$. If $S$ is symmetrically $G$-graded, then the induced $G/N$-grading is symmetric.
\label{prop:symmetric}
\end{proposition}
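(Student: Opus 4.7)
The plan is to show the two inclusions $S_C S_{C^{-1}} S_C \subseteq S_C$ and $S_C \subseteq S_C S_{C^{-1}} S_C$ separately for every coset $C \in G/N$, with the first being immediate from the grading axiom and the second reducing to the symmetry of the original $G$-grading.

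First I would unpack $S_{C^{-1}}$. If $C = gN$, then $C^{-1} = g^{-1}N$ in $G/N$, and an element $h \in G$ lies in $C^{-1}$ precisely when $h^{-1} \in C$. Consequently
\begin{equation*}
S_{C^{-1}} \;=\; \bigoplus_{h \in C^{-1}} S_h \;=\; \bigoplus_{g \in C} S_{g^{-1}}.
\end{equation*}
With this description in hand, the inclusion $S_C S_{C^{-1}} S_C \subseteq S_{C \cdot C^{-1} \cdot C} = S_C$ is an immediate consequence of the defining property (ii) of the $G/N$-grading.

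For the reverse inclusion, fix any $g \in C$. By hypothesis the original $G$-grading is symmetric, hence $S_g = S_g S_{g^{-1}} S_g$. Since $g \in C$ and $g^{-1} \in C^{-1}$, we have $S_g \subseteq S_C$ and $S_{g^{-1}} \subseteq S_{C^{-1}}$, so
\begin{equation*}
S_g \;=\; S_g S_{g^{-1}} S_g \;\subseteq\; S_C S_{C^{-1}} S_C.
\end{equation*}
As this holds for every $g \in C$, we conclude $S_C = \bigoplus_{g \in C} S_g \subseteq S_C S_{C^{-1}} S_C$, which combined with the previous inclusion yields $S_C S_{C^{-1}} S_C = S_C$, as required.

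There is no real obstacle here: the argument is a direct transfer of the symmetry property from each homogeneous component $S_g$ (for $g \in C$) to the coarser component $S_C$, using only the grading axiom and the direct sum decomposition of $S_C$ and $S_{C^{-1}}$. The one point worth being careful about is the identification of $S_{C^{-1}}$ as $\bigoplus_{g \in C} S_{g^{-1}}$, which uses that $N$ is normal (so $gN = Ng$ and hence $(gN)^{-1} = g^{-1}N$); once this is in place the proof is a one-line inclusion chase.
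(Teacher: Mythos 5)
Your proof is correct and follows essentially the same route as the paper: the forward inclusion is immediate from the grading axiom, and the reverse inclusion is obtained by transferring $S_g = S_g S_{g^{-1}} S_g$ into $S_C S_{C^{-1}} S_C$ for each $g \in C$, with normality of $N$ used exactly where you flag it (to identify $S_{C^{-1}}$ with $\bigoplus_{g \in C} S_{g^{-1}}$; the paper phrases this as $(gn)^{-1} = g^{-1}n'$ for some $n' \in N$). No gaps.
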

\begin{proof}
We need to show that for any class $C \in G/N$, we have that $S_C S_{C^{-1}} S_C = S_C$. The inclusion $S_C S_{C^{-1}} S_C \subseteq S_{C C^{-1} C } = S_C$ holds for any $C \in G/N$. Let $[g]=gN \in G/N$ denote the coset of $g \in G$. It remains to prove that $S_{[g]} \subseteq S_{[g]} S_{[g]^{-1}} S_{[g]}$ for all $g \in G$. But $S_{[g]} = \bigoplus_{n \in N} S_{gn}$, hence it is enough to show ${S_{gn} \subseteq S_{[g]} S_{[g]^{-1}} S_{[g]}}$ for all $n \in N$ and $g \in G$. But, since $S$ is symmetrically $G$-graded, it holds that ${S_{gn} \subseteq S_{gn} S_{(gn)^{-1}} S_{gn}}$. Furthermore, $(gn)^{-1} = n^{-1} g^{-1} = g^{-1} n' $ for some $n' \in N$ since $N$ is normal.  Thus, $S_{gn} \subseteq S_{[g]} S_{[g]^{-1}} S_{[g]}$.   
\end{proof}

Note that Proposition \ref{prop:symmetric} implies that the induced quotient group grading of any Nystedt-Öinert-Pinedo ring is symmetric. Hence, we will focus on deciding which of the local unit properties of Definition \ref{def:nystedt-gradings} are preserved. 

\smallskip

Before considering nearly epsilon-strongly graded rings, we recall the following useful but somewhat obscure result. For the convenience of the reader, we include a proof.

\begin{lemma}
(\cite[Thm. 1]{tominaga1976s}) Let $T$ be a ring and let $M$ be a left (right) $T$-module. Take a finite subset $X \subseteq M$ and assume that for each $x \in X$ there is some $e_x \in T$ such that $e_x x = x$ ($x e_x = x$). Then, there is some $e \in T$ such that $ex =x$ ($xe=x$) for all $x \in X$.
\label{lem:tominaga}
\end{lemma}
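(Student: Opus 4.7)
My plan is to prove the lemma by induction on $n = |X|$, treating the left-module case (the right-module case being entirely analogous).

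The base case $n=1$ is immediate, since we may simply take $e = e_{x_1}$. For the inductive step, write $X = \{x_1, \ldots, x_n\}$ and assume, by induction, that there exists $e \in T$ with $e x_i = x_i$ for $i = 1, \ldots, n-1$. We are also given $e_{x_n} \in T$ with $e_{x_n} x_n = x_n$. The naive combination $e + e_{x_n} - e_{x_n} e$ correctly fixes $x_1, \ldots, x_{n-1}$, but forcing $x_n$ to be fixed requires one extra ingredient. The trick I would use is to pass to the residue
\begin{equation*}
y := x_n - e x_n \in M,
\end{equation*}
apply the local-unit hypothesis to $y$ (which lies in the $T$-module generated by $x_n$ and $e x_n$, so the hypothesis produces, via the same $e_{x_n}$-type argument applied to $y$, an element $f \in T$ with $f y = y$), and then define
\begin{equation*}
e' := e + f - f e \in T.
\end{equation*}
Verification is short: for $i < n$ we have $e' x_i = e x_i + f x_i - f e x_i = x_i + f x_i - f x_i = x_i$, and for $x_n$ we compute
\begin{equation*}
e' x_n = e x_n + f x_n - f e x_n = e x_n + f(x_n - e x_n) = e x_n + f y = e x_n + y = x_n.
\end{equation*}
Thus $e'$ serves as a common local unit for all of $X$, closing the induction.

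The step I expect to be the genuine obstacle is the construction of the auxiliary $f \in T$ with $f y = y$, since the hypothesis of the lemma is phrased only for elements of $X$ itself, whereas $y$ is a newly produced element of $M$. In the Tominaga setting this is handled by the ambient $s$-unitality of the ring acting on $M$ (so the local-unit property extends to every element of $M$ one needs); in the applications of the lemma in this paper this will likewise be available. Once this single point is granted, the rest of the argument is the purely algebraic manipulation above, which is essentially the standard idempotent-join trick $a \vee b = a + b - ab$ specialized to the local-unit setting.
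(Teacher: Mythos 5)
Your argument is, up to a left--right mirror, exactly the paper's: the paper also inducts on $|X|$, forms residues ($v_i = x_i - e_{k+1}x_i$, subtracting the new element's local unit from the old elements, where you subtract the inductively obtained $e$ from the new element), obtains an element fixing all the residues, and recombines via the join-type formula $a+b-ab$. So the approaches coincide, and your verification computations are correct.

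The obstacle you flag is genuine, and --- this is the main thing worth telling you --- the paper's own proof does not resolve it: it invokes ``the induction hypothesis'' on $\{v_1,\dots,v_k\}$, but these residues are not elements of $X$, and the lemma's stated hypothesis supplies local units only for elements of $X$. As literally stated the lemma is even false: let $M=k^2$ with standard basis $x_1,x_2$ and let $T\subseteq M_2(k)$ be the non-unital subring of matrices of the form $\left(\begin{smallmatrix} a & a\\ b & b\end{smallmatrix}\right)$; then $e_1=\left(\begin{smallmatrix}1&1\\0&0\end{smallmatrix}\right)$ fixes $x_1$ and $e_2=\left(\begin{smallmatrix}0&0\\1&1\end{smallmatrix}\right)$ fixes $x_2$, yet $tx_1=tx_2$ for every $t\in T$, so no common local unit exists. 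The hypothesis actually needed is Tominaga's: $M$ (or at least the $T$-submodule generated by $X$) should be $s$-unital, so that the residues again admit local units; note that it is $s$-unitality of the module, not of the ring, that does the work. In the paper's application (Lemma \ref{lem:inv_exist}) the gap is harmless, since there the $x_i$ are homogeneous of degree $g_i$ and $e_{k+1}\in S_{g_{k+1}}S_{g_{k+1}^{-1}}\subseteq S_e$, so each residue $v_i$ again lies in $S_{g_i}$ and has a local unit by near epsilon-strongness. Your diagnosis is therefore correct, and the single point you deferred is precisely the point that must be supplied --- in your proof and in the paper's alike.
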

\begin{proof}
We only prove the left case as the right case is treated analogously. The proof goes by induction on the size of the set $X = \{ x_1, x_2, \dots, x_n \}$.

The base case $n=1$ is clear. 

Assume that the lemma holds for $n=k$ for some $k > 0$ and consider the subset, $$\{ x_1, x_2, \dots x_k, x_{k+1} \} \subseteq X.$$ Then, for all $1 \leq i \leq k+1$, there is some $e_i \in T$ such that $e_i x_i = x_i$. For $1 \leq i \leq k$, let $v_i = x_i - e_{k+1} x_i$. By the induction hypothesis, there is some $e' \in T$ such that $e' v_i = v_i$ for $1 \leq i \leq k$. Put $e = e' + e_{k+1} - e' e_{k+1}$. It is clear that $e \in T$. Moreover, 
\begin{equation*}
e x_{k+1} =  e' x_{k+1} + e_{k+1} x_{k+1} - e' e_{k+1} x_{k+1} = e' x_{k+1} + x_{k+1} - e' x_{k+1} = x_{k+1},
\end{equation*}
and, for $1 \leq i \leq k$, 
\begin{align*}
e x_{i} &= e' x_{i} + e_{k+1} x_{i} - e' e_{k+1} x_{i} = e' (x_{i} - e_{k+1} x_i ) + e_{k+1} x_i = e' v_i + e_{k+1} x_i \\ &= v_i +e_{k+1}x_i = x_i.
\end{align*}
Hence, the lemma follows by the induction principle.
\end{proof}

Throughout the rest of this section, let $N$ be a normal subgroup of $G$ and let $S = \bigoplus_{g \in G} S_g$ be a nearly epsilon-strongly $G$-graded ring.

\begin{lemma}
Consider a fixed class $C \in G/N$. For any positive integer $n$ and elements $s_{g_1}, s_{g_2}, \dots, s_{g_n}$ such that $s_{g_i} \in S_{g_i}$ and $g_i \in C$ for $1 \leq i \leq n$, there exists some $e \in S_C S_{C^{-1}}$ ($e' \in S_{C^{-1}} S_C$) such that $e s_{g_i} = s_{g_i}$ ($s_{g_i} e' = s_{g_i}$) for all $1 \leq i \leq n$. 
\label{lem:inv_exist}
\end{lemma}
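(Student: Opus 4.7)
The plan is to reduce to the Tominaga-type lemma (Lemma \ref{lem:tominaga}) that has just been recalled. The idea is that the nearly epsilon-strong hypothesis gives us individual approximate identities element-by-element; Tominaga's lemma is precisely the tool needed to combine these into one that handles the whole finite set $\{s_{g_1}, \dots, s_{g_n}\}$ simultaneously.

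First, I would set $T := S_C S_{C^{-1}}$ and verify that $T$ is a subring of $S$. By Proposition \ref{prop:symmetric}, the induced $G/N$-grading is symmetric, so $S_C S_{C^{-1}} S_C = S_C$. Multiplying by $S_{C^{-1}}$ on the right gives $T \cdot T = S_C S_{C^{-1}} S_C S_{C^{-1}} = S_C S_{C^{-1}} = T$, so $T$ is closed under multiplication and is a subring of $S_N$. The same symmetric identity shows $T \cdot S_C \subseteq S_C$, so $S_C$ is naturally a left $T$-module (and, symmetrically, a right $T$-module via $S_{C^{-1}} S_C$ if we work on the right).

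Next, for each $i \in \{1, \dots, n\}$, Proposition \ref{prop:epsilon_char}(b) applied to $s_{g_i} \in S_{g_i}$ yields an element $\epsilon_{g_i}(s_{g_i}) \in S_{g_i} S_{g_i^{-1}}$ such that $\epsilon_{g_i}(s_{g_i}) s_{g_i} = s_{g_i}$. Since $g_i \in C$ (and hence $g_i^{-1} \in C^{-1}$), we have the inclusion $S_{g_i} S_{g_i^{-1}} \subseteq S_C S_{C^{-1}} = T$. Thus each $s_{g_i}$ admits a left ``local identity'' lying in $T$.

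Now I would invoke Lemma \ref{lem:tominaga} with the ring $T$, the left $T$-module $M := S_C$, and the finite subset $X := \{s_{g_1}, \dots, s_{g_n}\}$. This produces a single $e \in T = S_C S_{C^{-1}}$ satisfying $e s_{g_i} = s_{g_i}$ for every $i$. The existence of $e' \in S_{C^{-1}} S_C$ with $s_{g_i} e' = s_{g_i}$ follows by the completely analogous argument, using the right-sided parts of Proposition \ref{prop:epsilon_char}(b) and Lemma \ref{lem:tominaga}, together with the symmetric identity $S_{C^{-1}} S_C S_{C^{-1}} = S_{C^{-1}}$. I do not foresee a substantive obstacle here; the only point requiring care is the verification that $S_C S_{C^{-1}}$ is actually a ring, which is why Proposition \ref{prop:symmetric} was proved first.
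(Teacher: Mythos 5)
Your proposal is correct and follows essentially the same route as the paper: the paper's proof also sets $T = S_C S_{C^{-1}}$, $M = S_C$, $X = \{s_{g_1},\dots,s_{g_n}\}$, obtains the individual left (right) local identities in $S_{g_i}S_{g_i^{-1}} \subseteq T$ from the nearly epsilon-strong hypothesis, and concludes by Lemma \ref{lem:tominaga}. The only difference is that you spell out the verification that $T$ is a ring and $S_C$ a $T$-module (which in fact already follows from $S_C S_{C'} \subseteq S_{CC'}$ without invoking symmetry), a point the paper leaves implicit.
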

\begin{proof}
We only prove the left case as the right case is treated analogously. Let $T= S_C S_{C^{-1}}$, $M=S_C$ and $X= \{ s_{g_1}, s_{g_2}, \dots, s_{g_n} \}$. Take an arbitrary integer $1 \leq i \leq n$. Since $S$ is nearly epsilon-strongly graded, there is some $e \in S_{g_i} S_{(g_i)^{-1}} \subseteq T$ such that $e s_{g_i} = s_{g_i}$. The statement now follows from Lemma \ref{lem:tominaga}.
\end{proof}
%
%

We can now prove that the functor $U_{G/N}$ restricts to the category of nearly epsilon-strongly $G$-graded rings.

\begin{proposition} 
Let $G$ be an arbitrary group and let $N$ be a normal subgroup of $G$. If $S = \bigoplus_{g \in G} S_g$ is a nearly epsilon-strongly $G$-graded ring, then the induced $G/N$-grading is nearly epsilon-strong.
\label{prop:nearly_induced}
\end{proposition}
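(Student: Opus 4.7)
The plan is to verify the two conditions that define a nearly epsilon-strong grading for the induced grading $\{S_C\}_{C \in G/N}$: namely, that it is symmetric and that $S_C S_{C^{-1}}$ is an $s$-unital ring for every $C \in G/N$. Symmetry is immediate from Proposition \ref{prop:symmetric}, so the real content is to establish $s$-unitality of each $S_C S_{C^{-1}}$.

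To do this, I would fix $C \in G/N$ and an arbitrary element $x \in S_C S_{C^{-1}}$, and write $x = \sum_{i=1}^{n} a_i b_i$ with $a_i \in S_C$ and $b_i \in S_{C^{-1}}$. Because $S_C = \bigoplus_{g \in C} S_g$, each $a_i$ expands as a finite sum of homogeneous elements $s_{g_{i,j}} \in S_{g_{i,j}}$ with $g_{i,j} \in C$; analogously each $b_i$ is a finite sum of homogeneous elements $t_{h_{i,k}} \in S_{h_{i,k}}$ with $h_{i,k} \in C^{-1}$. For the left-sided relation, I would apply the left version of Lemma \ref{lem:inv_exist} to the class $C$ and the finite collection $\{ s_{g_{i,j}} \}$, obtaining $e \in S_C S_{C^{-1}}$ with $e s_{g_{i,j}} = s_{g_{i,j}}$ for all $(i,j)$, whence $e a_i = a_i$ for all $i$ and therefore $ex = x$. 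For the right-sided relation, I would apply the right version of Lemma \ref{lem:inv_exist} to the class $C^{-1}$ and the finite collection $\{ t_{h_{i,k}} \}$; this yields an element $e' \in S_{(C^{-1})^{-1}} S_{C^{-1}} = S_C S_{C^{-1}}$ with $t_{h_{i,k}} e' = t_{h_{i,k}}$ for every $(i,k)$, so that $b_i e' = b_i$ for every $i$ and hence $x e' = x$. This shows $x \in (S_C S_{C^{-1}}) x \cap x (S_C S_{C^{-1}})$, so $S_C S_{C^{-1}}$ is $s$-unital, and by Definition \ref{def:nystedt-gradings}(d) the induced grading is nearly epsilon-strong.

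The only delicate point is a bookkeeping one, and this is what I expect to be the main obstacle: one must be careful to ensure that the absorbing element actually lies in the subring $S_C S_{C^{-1}}$, as opposed to $S_{C^{-1}} S_C$ or some larger object. In particular, the right-sided approximate unit must be obtained by applying the \emph{right} version of Lemma \ref{lem:inv_exist} to the \emph{inverse} class $C^{-1}$, so that the output lives in $S_{(C^{-1})^{-1}} S_{C^{-1}} = S_C S_{C^{-1}}$; applying the right version directly to $C$ would instead produce an element of $S_{C^{-1}} S_C$, which is not what the $s$-unital condition for $S_C S_{C^{-1}}$ requires. Beyond this observation, the rest of the argument is routine, since the real combinatorial work (namely, locating a single element that absorbs a prescribed finite collection of homogeneous summands) has already been performed in Lemma \ref{lem:inv_exist} via Tominaga's lemma.
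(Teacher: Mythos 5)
Your proof is correct. The only real difference from the paper's argument is the target you verify: the paper invokes the characterization in Proposition \ref{prop:epsilon_char}(b), so it only needs to produce, for each homogeneous-in-$G/N$ element $s \in S_C$, a left absorber in $S_C S_{C^{-1}}$ and a right absorber in $S_{C^{-1}} S_C$ --- which follows in one line from Lemma \ref{lem:inv_exist} after writing $s$ as a finite sum of $G$-homogeneous pieces. You instead verify Definition \ref{def:nystedt-gradings}(d) directly (symmetry via Proposition \ref{prop:symmetric} plus $s$-unitality of the ring $S_C S_{C^{-1}}$ itself), which costs you one extra layer of decomposition ($x = \sum a_i b_i$ and then each factor into $G$-homogeneous parts) and forces the bookkeeping point you correctly flag: the right-handed absorber must come from applying the right version of Lemma \ref{lem:inv_exist} to the class $C^{-1}$, so that it lands in $S_{(C^{-1})^{-1}} S_{C^{-1}} = S_C S_{C^{-1}}$ rather than in $S_{C^{-1}} S_C$. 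Both routes rest on the same key lemma (and hence on Tominaga's lemma); the paper's is shorter, while yours has the mild advantage of exhibiting the $s$-unitality of $S_C S_{C^{-1}}$ explicitly without passing through the equivalence of Proposition \ref{prop:epsilon_char}(b).
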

\begin{proof}
Take an arbitrary $C \in G/N$ and $s \in S_C$. We need to show that there exist some $\epsilon_C(s) \in S_C S_{C^{-1}}$ and $\epsilon_C(s)' \in S_{C^{-1}} S_C$ such that $\epsilon_C(s)s=s\epsilon_C(s)'=s$. Note that $s$ can be written as $s = s_{g_1} + s_{g_2} + \dots + s_{g_n}$ for some positive integer $n$ and elements $s_{g_i} \in S_{g_i}$ such that $g_i \in C$ for all $1 \leq i \leq n$. Hence, by Lemma \ref{lem:inv_exist}, there exist $e \in S_C S_{C^{-1}}$ and $e' \in S_{C^{-1}} S_C$ (depending on $C$ and $s$) such that $e s = s e' = s$. 
\end{proof}

We now consider essentially epsilon-strongly graded rings. 

\begin{lemma}
Let $S=\bigoplus_{g \in G}S_g$ be essentially epsilon-strongly $G$-graded where for each $g \in G$, $E_g$ is a set of local units for the ring $S_gS_{g^{-1}}$. Then for any $g \in G$ and any $s \in S_g$ there exist some $e \in E_g$ and $e' \in E_{g^{-1}}$ such that $es = s = s e'$. 
\label{lem:almost_module}
\end{lemma}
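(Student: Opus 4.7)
The plan is to leverage the symmetry of the grading (which gives $S_g = S_g S_{g^{-1}} S_g$, see Definition 2.3) together with the $\vee$-closure of the sets of local units $E_g$ and $E_{g^{-1}}$. The key observation is that although $s \in S_g$ need not itself lie in $S_g S_{g^{-1}}$, symmetry lets us factor $s$ through elements of $S_g S_{g^{-1}}$ and $S_{g^{-1}} S_g$, where $E_g$ and $E_{g^{-1}}$ act as local units respectively.

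First, since every essentially epsilon-strongly graded ring is in particular symmetrically graded, $s \in S_g = S_g S_{g^{-1}} S_g$, so we may write
\begin{equation*}
s = \sum_{i=1}^{n} a_i b_i c_i,
\end{equation*}
for some positive integer $n$ and homogeneous elements $a_i \in S_g$, $b_i \in S_{g^{-1}}$, $c_i \in S_g$. Then for each $i$, the product $a_i b_i$ lies in $S_g S_{g^{-1}}$, and since $E_g$ is a set of local units for $S_g S_{g^{-1}}$, there exists $f_i \in E_g$ with $f_i (a_i b_i) = a_i b_i$. Using that $E_g$ consists of commuting idempotents and is $\vee$-closed, the finite join $e := f_1 \vee f_2 \vee \cdots \vee f_n$ exists in $E_g$, and $f_i \leq e$ for each $i$. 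By Lemma \ref{lem:idem}, this implies $e(a_i b_i) = a_i b_i$ for all $i$, whence
\begin{equation*}
e s = \sum_{i=1}^n (e a_i b_i) c_i = \sum_{i=1}^n a_i b_i c_i = s.
\end{equation*}

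For the right identity, the analogous argument applies with the roles of left and right reversed: each $b_i c_i$ belongs to $S_{g^{-1}} S_g = S_{g^{-1}} S_{(g^{-1})^{-1}}$, so there exists $f'_i \in E_{g^{-1}}$ with $(b_i c_i) f'_i = b_i c_i$, and $e' := f'_1 \vee \cdots \vee f'_n \in E_{g^{-1}}$ satisfies $(b_i c_i) e' = b_i c_i$ for every $i$ by Lemma \ref{lem:idem}. Then $s e' = \sum_{i=1}^n a_i (b_i c_i e') = s$, completing the proof.

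The only mildly subtle point is that one must not try to apply the local-unit property directly to $s \in S_g$ (it need not lie in $S_g S_{g^{-1}}$); the symmetric decomposition is what funnels the argument into the subring where $E_g$ actually acts as local units. Once that is recognised, the proof is a routine application of Lemma \ref{lem:idem} combined with the $\vee$-closure of $E_g$ and $E_{g^{-1}}$.
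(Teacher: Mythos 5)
Your proof is correct and follows essentially the same route as the paper: both use the symmetry $S_g = S_g S_{g^{-1}} S_g$ to factor $s$ through $S_g S_{g^{-1}}$ (resp.\ $S_{g^{-1}}S_g$) and then invoke the local units $E_g$ (resp.\ $E_{g^{-1}}$). The only difference is that you handle the finite sum in the decomposition explicitly by taking the $\vee$-join of the individual local units, whereas the paper writes the element as a single product $s = r s'$ with $r \in S_g S_{g^{-1}}$ and applies one local unit to $r$ directly.
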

\begin{proof}
Take an arbitrary $g \in G$. Since $S$ is symmetrically graded, $S_g = S_g S_{g^{-1}} S_g= (S_g S_{g^{-1}}) S_g$. Hence, for any $s \in S_g$, we can write $s = r s'$ for some $r \in S_g S_{g^{-1}}$ and $s' \in S_g$. But since $E_g$ is a set of local units of $S_g S_{g^{-1}}$ there exists some $e \in E_g$ such that $e r = r = r e$. This implies that, $$e s = e (r s') =  (e r) s' = rs' = s.$$ The existence of $e' \in E_{g^{-1}}$ is proved similarly.
\end{proof}

Let $N$ be a normal subgroup of $G$ and let $S=\bigoplus_{g \in G} S_g$ be an essentially epsilon-strongly $G$-graded ring. By assumption $S_g S_{g^{-1}}$ has set of local units $E_g$ for each $g \in G$. We shall show that taking joins of these elements will be enough to construct local units for the rings $S_C S_{C^{-1}}$. For a family of sets $\{ E_i \}_{i \in I}$ we let $\bigvee_{i \in I} E_i$ denote the set of finite joins of elements in $\bigcup_{i \in I} E_i$. Note that $\bigvee_{i \in I} E_i$ is the $\vee$-closure of $\bigcup_{i \in I} E_i$.

\begin{proposition}
Consider a fixed class $C \in G/N$. Assume that for any $g, h \in C$ and any $e \in E_g, f \in E_h$ we have that $e \vee f$ exists. Then, $E_C = \bigvee_{g \in C} E_g$ is a set of local units for $S_C S_{C^{-1}}$. 
\label{prop:almost_local_units}
\end{proposition}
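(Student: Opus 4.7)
The plan is to verify, in turn, the three defining properties of a set of local units (as given in Section \ref{sec:2}) for $E_C$ inside $S_C S_{C^{-1}}$: namely that $E_C$ is a $\vee$-closed set of commuting idempotents in $S_C S_{C^{-1}}$, and that for every $r \in S_C S_{C^{-1}}$ there exists $e \in E_C$ with $er = re = r$.

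First, I would observe that $g \in C$ forces $S_g \subseteq S_C$ and $S_{g^{-1}} \subseteq S_{C^{-1}}$, so each $E_g \subseteq S_g S_{g^{-1}} \subseteq S_C S_{C^{-1}}$. By Proposition \ref{prop:symmetric} the induced $G/N$-grading is symmetric, so $(S_C S_{C^{-1}})^2 = S_C S_{C^{-1}}$ and thus $S_C S_{C^{-1}}$ is a subring of $S_e$. Using the standing hypothesis that $e \vee f$ exists for all $e \in E_g, f \in E_h$ with $g,h \in C$, together with the formula $a \vee b = a + b - ab$ recalled in the introduction for commuting idempotents, a straightforward induction on the number of factors shows that arbitrary finite joins $\bigvee_i e_i$ of elements in $\bigcup_{g \in C} E_g$ exist, are commuting idempotents, and remain inside $S_C S_{C^{-1}}$. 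By construction, $E_C$ is the $\vee$-closure of $\bigcup_{g \in C} E_g$, so it is $\vee$-closed.

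The main part of the proof addresses the local unit property. Given $r \in S_C S_{C^{-1}}$, I would expand it as a finite sum $r = \sum_{k=1}^{n} a_k b_k$ with $a_k \in S_{g_k}$, $b_k \in S_{h_k}$, $g_k \in C$, and $h_k \in C^{-1}$. Applying Lemma \ref{lem:almost_module}, for each index $k$ I obtain elements $e_k \in E_{g_k}$ with $e_k a_k = a_k$ and $f_k \in E_{h_k^{-1}}$ (noting that $h_k^{-1} \in C$) with $b_k f_k = b_k$. Setting $e := \bigvee_{k=1}^{n}(e_k \vee f_k)$, which lies in $E_C$ by the join-existence hypothesis, the relations $e \geq e_k$ and $e \geq f_k$ translate to $e e_k = e_k e = e_k$ and $e f_k = f_k e = f_k$, so that $e a_k = e(e_k a_k) = (e e_k) a_k = e_k a_k = a_k$ and, symmetrically, $b_k e = b_k$ for every $k$. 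Summing over $k$ yields $er = r = re$.

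The main technical obstacle is bookkeeping the existence of joins: one must ensure that each finite join used above really lands inside $E_C$ rather than in a larger ambient idempotent poset, which is precisely the content of the standing hypothesis of the proposition. Once this is granted, the argument is a routine manipulation of commuting idempotents via $a \vee b = a + b - ab$ and the characterization $a \leq b \iff ab = ba = a$ from the introduction.
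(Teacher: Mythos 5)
Your proof is correct and follows essentially the same route as the paper: decompose an element of $S_C S_{C^{-1}}$ into a finite sum of products of $G$-homogeneous factors, apply Lemma \ref{lem:almost_module} to each factor to obtain one-sided units in the various $E_g$ with $g \in C$, and absorb them all into a single finite join lying in $E_C$. The only difference is cosmetic bookkeeping (you index over summands directly rather than via the support sets $A$ and $B$), and your appeal to the join-existence hypothesis for the final finite join is exactly the step the paper also relies on.
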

\begin{proof}
Let $z \in S_C S_{C^{-1}}$. We shall construct an element $e \in E_C$ (depending on $C$ and $z$) such that $ez=ze=z$. Note that $z =\sum_i x_i y_i$  where the sum is finite and $x_i \in S_C$ and $y_i \in S_{C^{-1}}$ for each $i$. Next, consider a fixed $i$. Then, $x_i$ decomposes uniquely as a finite sum $x_i = \sum s_g$ where $s_g \in S_g$.  Let $\text{Supp}(x_i) := \{ g \in G \mid s_g \ne 0\}$ and note that $\text{Supp}(x_i)$ is a finite subset of $C$.  Similarly, $y_i$ decomposes uniquely as a finite sum $y_i = \sum t_h$ where $t_h \in S_h$. Let $\text{Supp}(y_i) := \{ h \in G \mid t_h \ne 0 \}$ and note that $\text{Supp}(y_i)$ is a finite subset of $C^{-1}$. Hence, the sets,   

\begin{equation}
 A = \bigcup_{i} \text{Supp}(x_i), \quad B = \bigcup_{i} \text{Supp}(y_i),
\label{eq:l27}
\end{equation}

are both finite. By substituting the expressions for $x_i$ and $y_i$ in the definition of $z$ we see that $z$ is a finite sum of elements  $s_g t_h$ with $g \in A$, $h \in B$, $s_g \in S_g$ and $t_h \in S_h$.  

Take $g \in A, h \in B$ and corresponding elements $s_g \in S_g, t_h \in S_h$. By Lemma \ref{lem:almost_module}, there are $\epsilon_g(s_g) \in E_g \subseteq E_C$ and $\epsilon_h'(t_h) \in E_{h^{-1}} \subseteq E_C$ such that $\epsilon_g(s_g) s_g = s_g$ and $ t_h \epsilon_h'(t_h) = t_h$. From Lemma \ref{lem:idem} it follows that, 
\begin{equation*}
s_g t_h = (\epsilon_g(s_g) \vee \epsilon_h'(t_h)) s_g t_h = s_g t_h (\epsilon_g(s_g) \vee \epsilon_h'(t_h)).
\end{equation*}

Now, let
\begin{equation}
e = \bigvee_{g \in A} \epsilon_g(s_g) \vee \bigvee_{h \in B} \epsilon_h'(t_h).
\label{eq:l28}
\end{equation}
Since the sets in (\ref{eq:l27}) are finite, the upper bounds on the right hand side of (\ref{eq:l28}) exist by our assumptions. Furthermore, $e \in E_C$ by construction. Moreover, by (\ref{eq:l28}) and noting that $\epsilon_g(s_g) \vee \epsilon_h'(t_h) \leq e$ for all $g \in A, h \in B$,
\begin{equation*}
e z = e \sum s_g t_h = \sum (e s_g t_h) = \sum s_g t_h = z.
\end{equation*}
Similarly, $ z e = z$. Hence, $E_C$ is a set of local units for $S_C S_{C^{-1}}$.
\end{proof}

We can now finish the essentially epsilon-strongly graded case, with the following conclusion.

\begin{corollary}
Let $N$ be a normal subgroup of $G$ and let $S=\bigoplus_{g \in G}S_g$ be essentially epsilon-strongly $G$-graded where for each $g \in G$, $E_g$ is a set of local units for $S_g S_{g^{-1}}$. For every class $C \in G/N$, assume that for any $g,h \in C$ and any $e \in E_g, f \in E_h$ the join $e \vee f$ exists. Then, the induced $G/N$-grading is essentially epsilon-strong.
\label{cor:essentially_induced}
\end{corollary}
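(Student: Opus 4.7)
The plan is to piece this together from two results already proved in the section. First, the induced $G/N$-grading inherits symmetry for free: since $S$ is essentially epsilon-strongly graded, it is in particular symmetrically $G$-graded, so Proposition \ref{prop:symmetric} immediately gives $S_C S_{C^{-1}} S_C = S_C$ for every $C \in G/N$. Thus the symmetry clause of Definition \ref{def:nystedt-gradings} is handled, and the only remaining task is to exhibit, for each class $C \in G/N$, a set of local units for the ring $S_C S_{C^{-1}}$.

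For this, the core input is Proposition \ref{prop:almost_local_units}. Its hypothesis matches the hypothesis of the corollary verbatim: for every $C \in G/N$ and any $g, h \in C$, any $e \in E_g$ and $f \in E_h$ admit a join $e \vee f$. So, with $C$ fixed, I would directly apply Proposition \ref{prop:almost_local_units} to obtain that
\[
E_C \;=\; \bigvee_{g \in C} E_g
\]
is a set of local units for $S_C S_{C^{-1}}$.

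Putting the two ingredients together: the induced $G/N$-grading is symmetric, and for each $C \in G/N$ the principal-ish component $S_C S_{C^{-1}}$ carries a set of local units. By Definition \ref{def:nystedt-gradings}(c), this is exactly what it means for the induced $G/N$-grading to be essentially epsilon-strong.

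Since Proposition \ref{prop:almost_local_units} does essentially all the work, there is no genuine obstacle left in the corollary itself; the proof is a one-line assembly of the symmetric-preservation result with the local-unit construction. The only point that deserves a brief remark is why the hypothesis on existence of joins $e \vee f$ is imposed class by class — it is because Proposition \ref{prop:almost_local_units} is formulated for a single fixed $C$, so in the corollary one simply quantifies that hypothesis over all $C \in G/N$ before invoking the proposition.
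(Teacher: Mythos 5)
Your proof is correct and follows exactly the paper's own argument: Proposition \ref{prop:symmetric} for symmetry of the induced grading, then Proposition \ref{prop:almost_local_units} applied class by class to get the set of local units $E_C$ for $S_C S_{C^{-1}}$, and Definition \ref{def:nystedt-gradings}(c) to conclude. No differences worth noting.
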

\begin{proof}
By Proposition \ref{prop:symmetric} the induced $G/N$-grading is symmetric. Furthermore, by Proposition \ref{prop:almost_local_units}, it follows that for each $C \in G/N$, the ring $S_C S_{C^{-1}}$ has a set of local units. Hence, the induced $G/N$-grading is essentially epsilon-strong.
\end{proof}

We now consider the virtually epsilon-strongly graded case. It turns out that we will need some further assumptions in order to prove that the induced quotient group grading is virtually epsilon-strong.

\begin{lemma}
Let $S=\bigoplus_{g \in G} S_g$ be virtually epsilon-strongly $G$-graded where for each $g \in G$, $M_g$ is a set of pairwise orthogonal, commuting idempotents such that $E_g = \bigvee M_g$ is a set of local units for $S_g S_{g^{-1}}$. Then, for any $g \in G$ and $s \in S_g$ there exist $m_1, m_2, \dots, m_i \in M_g$ and $m_1', m_2', \dots, m_j' \in M_{g^{-1}}$ such that, 
\begin{equation*}
( m_1 \vee m_2 \vee \dots \vee m_i ) s = s,
\end{equation*}
and,
\begin{equation*}
s ( m_1' \vee m_2' \vee \dots \vee m_j') = s.
\end{equation*}
\end{lemma}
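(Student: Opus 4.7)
The plan is to reduce immediately to the already-proved essentially epsilon-strongly graded case and then unpack what membership in $\bigvee M_g$ means. By Proposition \ref{prop:local_unital_rings_inclusions}, every ring with enough idempotents has a set of local units, so a virtually epsilon-strongly graded ring is in particular essentially epsilon-strongly graded (cf. Remark \ref{rem:inclusions}(a)). Concretely, for each $g \in G$ the set $E_g = \bigvee M_g$ serves as a set of local units for $S_g S_{g^{-1}}$, which is exactly the hypothesis needed to invoke Lemma \ref{lem:almost_module}.

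First I would fix $g \in G$ and $s \in S_g$ and apply Lemma \ref{lem:almost_module} twice. This produces elements $e \in E_g$ and $e' \in E_{g^{-1}}$ satisfying $e s = s = s e'$. Next I would recall the definition of $\bigvee M_g$ as the $\vee$-closure of $M_g$: every element of $E_g$ is, by definition, a finite join of elements of $M_g$ (provided such joins exist, which is precisely the standing assumption). Thus we may write $e = m_1 \vee m_2 \vee \dots \vee m_i$ with $m_1, \ldots, m_i \in M_g$, and similarly $e' = m_1' \vee m_2' \vee \dots \vee m_j'$ with $m_1', \ldots, m_j' \in M_{g^{-1}}$. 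Substituting these expressions into the identities $es = s$ and $se' = s$ gives exactly the two equations in the statement.

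There is essentially no main obstacle: once one observes that the hypothesis of the lemma is a strengthening of the hypothesis of Lemma \ref{lem:almost_module}, the content reduces to noting that any element of the $\vee$-closure of $M_g$ is by construction a finite join of elements from $M_g$. The only slightly nontrivial check is that Lemma \ref{lem:almost_module} is indeed applicable, which requires observing that rings with enough idempotents automatically have sets of local units; this is guaranteed by Proposition \ref{prop:local_unital_rings_inclusions}.
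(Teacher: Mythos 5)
Your proof is correct and follows exactly the same route as the paper: the paper's own proof is the one-line observation that the claim "follows from Lemma \ref{lem:almost_module} and the fact that $E_g = \bigvee M_g$ is a set of local units for $S_g S_{g^{-1}}$," which is precisely your argument spelled out. The extra detail you supply — that membership in the $\vee$-closure $\bigvee M_g$ means being a finite join of elements of $M_g$ — is the intended (and correct) unpacking.
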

\begin{proof}
Follows from Lemma \ref{lem:almost_module} and the fact that $E_g = \bigvee M_g$ is a set of local units for $S_g S_{g^{-1}}$.
\end{proof}

\begin{proposition}
Let $N$ be a normal subgroup of $G$ and let $S=\bigoplus_{g \in G}S_g$ be virtually epsilon-strongly $G$-graded where for each $g \in G$, $M_g$ is a set of commuting orthogonal idempotents such that $E_g = \bigvee M_g$ is a set of local units for $S_g S_{g^{-1}}$.  
For each class $C \in G/N$, let $A_C := \bigvee_{g \in C} M_g$ and assume that the following statements hold:
\begin{enumerate}[(a)]
\begin{item}
For any $g,h \in C$ and $e \in M_g, f \in M_h$ we have that $ef =fe$;
\end{item}
\begin{item}
The maximal elements of $(A_C, \leq)$ are pairwise orthogonal idempotents;
\end{item}
\begin{item}
Every chain in the poset $(A_C, \leq)$ is finite.
\end{item}
\end{enumerate}
Then, the induced $G/N$-grading is virtually epsilon-strong.
\label{prop:virtually_induced}
\end{proposition}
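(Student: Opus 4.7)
The plan is to reduce the problem to the essentially epsilon-strongly graded case via Corollary \ref{cor:essentially_induced} and then extract from the resulting set of local units a distinguished family of pairwise orthogonal idempotents. By Proposition \ref{prop:symmetric}, the induced $G/N$-grading is automatically symmetric, so the task reduces to exhibiting, for each fixed $C \in G/N$, a set of pairwise orthogonal commuting idempotents of $S_C S_{C^{-1}}$ whose $\vee$-closure is a set of local units for $S_C S_{C^{-1}}$.

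First I would verify that the hypothesis of Corollary \ref{cor:essentially_induced} is satisfied. Since each $M_g$ consists of pairwise orthogonal commuting idempotents, every element of $E_g = \bigvee M_g$ is a finite sum of elements of $M_g$. Assumption (a) then propagates to the sums, so for any $g, h \in C$, any $e \in E_g$ and $f \in E_h$ commute, and the join $e \vee f = e + f - ef$ exists. Corollary \ref{cor:essentially_induced} therefore applies, and yields that $A_C = \bigvee_{g \in C} E_g = \bigvee_{g \in C} M_g$ is a set of local units for $S_C S_{C^{-1}}$.

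Next, I would let $F_C$ be the set of maximal elements of the poset $(A_C, \leq)$. By assumption (b), $F_C$ consists of pairwise orthogonal idempotents; extending (a) to finite joins shows that the elements of $F_C$ commute pairwise as well. The remaining step is to show that $\bigvee F_C$ is a set of local units for $S_C S_{C^{-1}}$. Given $x \in S_C S_{C^{-1}}$, choose $a \in A_C$ with $ax = xa = x$ and write $a = m_1 \vee \cdots \vee m_k$ with $m_i \in M_{g_i}$, $g_i \in C$. Assumption (c) guarantees that for each $m_i$ the chain-finite subposet $\{b \in A_C : m_i \leq b\}$ admits a maximal element $f_i$, which necessarily lies in $F_C$. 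Setting $b := f_1 \vee \cdots \vee f_k$, which by the pairwise orthogonality of $F_C$ equals $f_1 + \cdots + f_k$, produces an element of $\bigvee F_C$ dominating each $m_i$ and hence dominating $a$. From $a \leq b$ we get $ba = ab = a$, and therefore $bx = b(ax) = (ba)x = ax = x$, with $xb = x$ established symmetrically. This gives the enough idempotents property for $S_C S_{C^{-1}}$ and completes the argument.

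The main technical obstacle is to arrange the three assumptions (a)--(c) so that they interact correctly in this extraction step. Assumption (a) is what guarantees that the relevant joins in $A_C$ exist at all and that elements of $A_C$ form a commuting family; assumption (b) is what elevates commutation of maximal elements to genuine pairwise orthogonality (something commutation alone does not give); and assumption (c) is exactly what is needed to ensure that every element of $A_C$ is dominated by an element of $\bigvee F_C$, which is the bridge from $A_C$-local units to $F_C$-generated local units. Without any one of these, the passage from a set of local units to a set of enough idempotents would break down.
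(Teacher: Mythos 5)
Your argument is correct and follows essentially the same route as the paper's: the distinguished orthogonal family is the set of maximal elements of $(A_C,\leq)$, assumption (c) is used to dominate each element of $A_C$ by a maximal one, and (a), (b) supply commutation and orthogonality. The only organizational difference is that you reuse Proposition \ref{prop:almost_local_units} (via Corollary \ref{cor:essentially_induced}) to produce a local unit $a \in A_C$ and then push it up into $\bigvee F_C$, whereas the paper re-runs the support decomposition of $z=\sum_i x_i y_i$ and applies the map $\mu$ termwise; the two versions rest on the same ideas.
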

\begin{proof}
Consider the induced $G/N$-grading and take an arbitrary $C \in G/N$. To establish the proposition we need to show that $S_C S_{C^{-1}}$ is a ring with enough idempotents. Put, 
$A_C = \bigvee_{g \in C} M_g$ and note that $ef=fe \in A_C$ for any $e, f \in A_C$. In other words, $A_C$ is a set of commuting idempotents that is closed with regards to the join operator $\vee$. We want to choose the maximal elements from $A_C$ with respect to the idempotent ordering. More precisely, let,
\begin{equation*}
D_C = \{ e \in A_C \mid e \text{ is maximal in } (A_C, \leq) \}.
\end{equation*}
By assumptions (a) and (b) we get that $D_C$ is a set of pairwise orthogonal, commuting idempotents. Furthermore, we claim that  that $\bigvee D_C$ is a set of local units for $S_C S_{C^{-1}}$. To this end, we define a function $\mu \colon A_C \to D_C$ by sending $m \in A_C$ to an element $\mu(m) \in D_C$ such that $m \leq \mu(m)$. Note that this is well-defined because of (c).

Now, let $z \in S_C S_{C^{-1}}$ be a general element. We shall construct an element $e \in \bigvee D_C$ such that $ez = z = ze$. Note that $z =\sum_i x_i y_i$  where the sum is finite and $x_i \in S_C$ and $y_i \in S_{C^{-1}}$ for each $i$. Next, if we consider a fixed $i$, $x_i$ decomposes as a finite sum $x_i = \sum s_g$ where $0 \ne s_g \in S_g$ for a finite number of elements $g \in C$. Similarly, $y_i = \sum t_h$ where $0 \ne t_h \in S_h$ for a finite number of elements $h \in C^{-1}$. More precisely, the sets,   
\begin{equation*}
 A = \bigcup_{i} \text{Supp}(x_i), \quad B = \bigcup_{i} \text{Supp}(y_i),
\label{eq:l7}
\end{equation*}
are both finite. By substituting the expressions for $x_i$ and $y_i$ in the definition of $z$ we see that $z$ is a finite sum of elements $s_g t_h$ with $g \in A$, $h \in B$, $s_g \in S_g$ and $t_h \in S_h$.

Take $g \in A, h \in B$ and let $a_g b_h$ be any element such that $a_g \in S_g, b_h \in S_h$. By Lemma \ref{lem:almost_module} there exist $m_1, m_2, \dots, m_i \in M_g \subseteq A_C$ and $m_1', m_2', \dots, m_j' \in M_{h^{-1}} \subseteq A_C$ such that, $(m_1 \vee m_2 \vee \dots \vee m_i)a_g = a_g$ and $b_h (m_1' \vee m_2' \vee \dots \vee m_j') = b_h$. Hence, 
\begin{equation*}
(m_1 \vee m_2 \vee \dots \vee m_i \vee m_1' \vee m_2' \vee \dots \vee m_j') a_g b_h = a_g b_h (m_1 \vee m_2 \vee \dots \vee m_i \vee m_1' \vee m_2' \vee \dots \vee m_j').
\end{equation*}
Let $f_{g} = \mu(m_1) \vee \mu(m_2) \vee \dots \vee \mu(m_i)$ and $f'_{h} = \mu(m_1') \vee \mu(m_2') \vee \dots \vee \mu(m_j')$. Note that $f_g \vee f'_h \in \bigvee D_C$ and $m \leq \mu(m)$ for any $m \in A_C$. Hence, 

\begin{equation}
a_g b_h = (f_g \vee f'_h) a_g b_h = a_g b_h (f_g \vee f'_h).
\label{eq:999}
\end{equation}
Let $e = \bigvee_{g \in A} f_g \vee \bigvee_{h \in B} f'_h$. Then, by (\ref{eq:999}) we get that $z = ez = ze$. 
\end{proof}

As a side note, we notice that the assumptions (a), (b) and (c) in Proposition \ref{prop:virtually_induced} are in fact satisfied in the special case of Leavitt path algebras. By studying the proof of Proposition \ref{prop:lpa_is_virtually} it can be seen that any element of $E_g$, for any $g \in G$, is of the form $\sum_i \alpha_i \alpha_i^*$ for some paths $\alpha_i$. It is straightforward to use (\ref{eq:1}) to show that any two elements of this form commute, hence (a) is satisfied. For any subset $A \subseteq G$ consider the maximal elements $M$ of the set $\bigvee_{g \in A} M_g$ with respect to the idempotent ordering. Again by using (\ref{eq:1}), we see that $M$ is the minimal elements with regards to the initial subpath ordering, i.e. $\alpha \alpha^* \leq \beta \beta^* $ if and only if $\alpha$ is an initial subpath of $\beta$. Hence, for any $\alpha \alpha^*, \beta \beta^* \in M$ such that $\alpha \ne \beta$, we have $(\alpha \alpha^*)(\beta \beta^*)=0$ by (\ref{eq:1}). Moreover, any given path $\alpha$ only has finitely many initial subpaths, i.e. any chain in the idempotent ordering containing $\alpha \alpha^*$ is finite. Thus (b) and (c) are satisfied in the case of Leavitt path algebras. Hence, as a corollary to Proposition \ref{prop:virtually_induced} and Proposition \ref{prop:lpa_is_virtually}, we obtain the following.

\begin{corollary}
Let $G$ be an arbitrary group, let $R$ be a unital ring and let $E$ be a directed graph. Consider the Leavitt path algebra $L_R(E)$ equipped with any standard $G$-grading. For every normal subgroup $N$ of $G$, the induced $G/N$-grading is virtually epsilon-strong. \label{cor:lpa_induced}
\end{corollary}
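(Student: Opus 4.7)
The plan is to deduce the corollary by applying Proposition \ref{prop:virtually_induced} to the data already produced by Proposition \ref{prop:lpa_is_virtually}. From the proof of Proposition \ref{prop:lpa_is_virtually} we have, for each $g \in G$, an explicit set $M_g = \{ \alpha \alpha^* \mid \alpha \text{ minimal in } (A_g, \leq) \}$, where $\leq$ denotes the initial-subpath order, consisting of pairwise orthogonal commuting idempotents whose $\vee$-closure $E_g$ is a set of local units for $S_g S_{g^{-1}}$. It therefore suffices to verify hypotheses (a), (b), (c) of Proposition \ref{prop:virtually_induced} for every coset $C \in G/N$ using these particular $M_g$'s.

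The unifying tool for all three verifications is formula (\ref{eq:1}) from the proof of Proposition \ref{prop:lpa_is_virtually}, which both gives commutativity $(\alpha\alpha^*)(\beta\beta^*) = (\beta\beta^*)(\alpha\alpha^*)$ for arbitrary paths $\alpha, \beta$ and establishes an order-reversing correspondence between the idempotent order on single-path idempotents and the initial-subpath order: namely $\alpha\alpha^* \leq_{\mathrm{idem}} \beta\beta^*$ if and only if $\beta$ is an initial subpath of $\alpha$. Commutativity gives (a) immediately. For (c), a chain in $(A_C, \leq_{\mathrm{idem}})$ passing through a single-path idempotent $\alpha\alpha^*$ corresponds under this order reversal to a chain of initial subpaths of $\alpha$, which is finite because $\alpha$ has finite length; the general case of a chain in $A_C$, whose elements are finite joins of single-path idempotents $\alpha\alpha^*$, reduces to this single-path analysis by decomposing joins into their orthogonal summands.

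The most delicate step will be hypothesis (b). My plan is to argue that the maximal elements of $(A_C, \leq_{\mathrm{idem}})$ arise from the single-path idempotents $\alpha\alpha^*$ with $\alpha$ minimal in the subpath order on $\bigcup_{g \in C} A_g$: two distinct such minimal paths $\alpha \ne \beta$ are incomparable, and hence by (\ref{eq:1}) satisfy $(\alpha\alpha^*)(\beta\beta^*) = 0$, yielding pairwise orthogonality. The main obstacle will be to pin down precisely which joins in the $\vee$-closure $A_C$ are maximal, and in particular to check that any maximal element decomposes cleanly as an orthogonal sum of single-path idempotents of this minimal-path form. This should follow from the order correspondence together with the fact that a join of pairwise orthogonal commuting idempotents coincides with their sum. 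Once (a)--(c) are verified, Proposition \ref{prop:virtually_induced} delivers the conclusion that the induced $G/N$-grading of $L_R(E)$ is virtually epsilon-strong.
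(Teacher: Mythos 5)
Your proposal follows the paper's argument exactly: both deduce the corollary by feeding the sets $M_g$ constructed in the proof of Proposition \ref{prop:lpa_is_virtually} into Proposition \ref{prop:virtually_induced}, and both verify hypotheses (a), (b), (c) via formula (\ref{eq:1}) --- commutativity for (a), the order-reversing correspondence between the idempotent order and the initial-subpath order for (b), and finiteness of the set of initial subpaths of a given path for (c). The subtlety you flag in (b), namely pinning down the maximal elements of the $\vee$-closure $A_C$ as opposed to those of $\bigcup_{g \in C} M_g$, is glossed over in the paper's own treatment in essentially the same way you propose to handle it.
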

We now return to the main track and apply our investigation to the induced quotient group gradings of epsilon-strongly graded rings. The following theorem is one of our main results:

\begin{theorem}
Let $G$ be an arbitrary group and let $S=\bigoplus_{g \in G} S_g$ be an epsilon-strongly $G$-graded ring. 
Consider the induced $G/N$-grading of $S$. The following assertions hold:

\begin{enumerate}[(a)]
\begin{item}
For each class $C \in G/N$, the set $E_C = \bigvee \{ \epsilon_g \mid g \in C \}$ is a set of local units for the ring $S_C S_{C^{-1}}$. In particular, the induced $G/N$-grading of $S$ is essentially epsilon-strong;
\end{item}
\begin{item}
Suppose that for each class $C \in G/N$, (i) the poset $(E_C, \leq)$ contains no infinite chain and (ii) the maximal elements of $(E_C, \leq)$ are pairwise orthogonal. Then, the induced $G/N$-grading of $S$ is virtually epsilon-strong.
\end{item}
\end{enumerate}
\label{thm:main}
\end{theorem}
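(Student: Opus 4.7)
My plan is to derive both parts of Theorem \ref{thm:main} by specializing the general results for Nystedt-Öinert-Pinedo gradings established earlier in the section. The key observation is that every epsilon-strong grading fits into the essentially / virtually epsilon-strong framework by taking the singleton $\{\epsilon_g\}$ as the distinguished set of idempotents for $S_g S_{g^{-1}}$. Since the introduction records that the $\epsilon_g$ are central idempotents of $S_e$, any two of them commute, and this will let the hypotheses of Corollary \ref{cor:essentially_induced} and Proposition \ref{prop:virtually_induced} be checked essentially for free.

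For part (a), I would first observe that, because $\epsilon_g$ is the multiplicative identity of $S_g S_{g^{-1}}$, the singleton $E_g := \{\epsilon_g\}$ is trivially a $\vee$-closed set of commuting idempotents providing a set of local units for $S_g S_{g^{-1}}$. Hence $S$ is essentially epsilon-strongly $G$-graded in the sense of Definition \ref{def:nystedt-gradings}. For any class $C \in G/N$ and any $g,h \in C$, the idempotents $\epsilon_g, \epsilon_h \in Z(S_e)$ commute, so $\epsilon_g \vee \epsilon_h = \epsilon_g + \epsilon_h - \epsilon_g \epsilon_h$ exists. This is precisely the hypothesis of Corollary \ref{cor:essentially_induced}, and the proof of Proposition \ref{prop:almost_local_units} identifies the resulting set of local units for $S_C S_{C^{-1}}$ with $\bigvee_{g \in C} E_g = \bigvee\{\epsilon_g \mid g \in C\} = E_C$. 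In particular, $\{S_C\}_{C \in G/N}$ is essentially epsilon-strong.

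For part (b), I would invoke Proposition \ref{prop:virtually_induced} with the choice $M_g := \{\epsilon_g\}$. Each $M_g$ is vacuously a set of pairwise orthogonal commuting idempotents, and $\bigvee M_g = \{\epsilon_g\}$ is a set of local units for $S_g S_{g^{-1}}$, so $S$ is virtually epsilon-strongly $G$-graded. With this choice, the poset $A_C := \bigvee_{g \in C} M_g$ from Proposition \ref{prop:virtually_induced} coincides with the set $E_C$ of part (a). It remains to verify conditions (a)--(c) of that proposition: (a) $\epsilon_g \epsilon_h = \epsilon_h \epsilon_g$ holds for all $g,h \in C$ because $\epsilon_g, \epsilon_h \in Z(S_e)$; (b) pairwise orthogonality of the maximal elements of $(A_C,\leq) = (E_C,\leq)$ is exactly assumption (ii) of the theorem; and (c) the finite-chain condition is exactly assumption (i). Proposition \ref{prop:virtually_induced} then yields that the induced $G/N$-grading is virtually epsilon-strong.

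The work is essentially bookkeeping: the main step is to recognize that, via the identifications $E_g = M_g = \{\epsilon_g\}$, the centrality of the $\epsilon_g$ collapses the commutativity hypotheses of Corollary \ref{cor:essentially_induced} and Proposition \ref{prop:virtually_induced} to triviality. The only potential obstacle is ensuring that the set $E_C$ appearing in the statement of Theorem \ref{thm:main} is genuinely the same object as the set produced by Proposition \ref{prop:almost_local_units}, which it is since both are defined as $\bigvee\{\epsilon_g \mid g \in C\}$.
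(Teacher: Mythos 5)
Your proposal is correct and follows essentially the same route as the paper's own proof: part (a) via $E_g=\{\epsilon_g\}$, centrality of the $\epsilon_g$, and Proposition \ref{prop:almost_local_units} (equivalently Corollary \ref{cor:essentially_induced}); part (b) via Proposition \ref{prop:virtually_induced} with $M_g=\{\epsilon_g\}$, identifying $A_C$ with $E_C$ so that conditions (b) and (c) there become exactly hypotheses (ii) and (i). No gaps.
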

\begin{proof}

(a): Take an arbitrary $g \in G$. Since $S_g S_{g^{-1}}$ is a unital ring with multiplicative identity element $\epsilon_g$, it is a ring with a set of local units $E_g = \{ \epsilon_g \}$. Furthermore, $\epsilon_g \in Z(S_e)$ (see \cite[Prop. 5]{nystedt2016epsilon}). This means that $\epsilon_g \epsilon_h = \epsilon_h \epsilon_g$ for every $g, h \in G$. Hence, $\epsilon_g \vee \epsilon_h$ exists for every $g, h\in G$. Thus, by Proposition \ref{prop:almost_local_units}, for each $C \in G/N$, the ring $S_C S_{C^{-1}}$ has a set of local units,
\begin{equation*}
E_C = \bigvee_{g \in C} E_g = \bigvee \{ \epsilon_g \mid g \in C \}.
\end{equation*}


(b): Note that $S$ is virtually epsilon-strongly $G$-graded by letting $M_g = \{ \epsilon_g \}$ for each $g \in G$. Condition (a) in Proposition \ref{prop:virtually_induced} is satisfied since the $\epsilon_g$'s are central idempotents. Moreover, condition (b) and (c) are in this special case equivalent to $(E_C, \leq)$ having no infinite chains and $(E_C, \leq)$ having pairwise orthogonal maximal elements for each $C \in G/N$. 
\end{proof}

\begin{remark}We make no claim about the necessity of the conditions in Theorem \ref{thm:main}(b). Is it true that any induced quotient group grading of an arbitrary epsilon-strongly graded ring is virtually epsilon-strong? The author has not been able to find any example where the induced quotient grading is essentially epsilon-strong but not virtually epsilon-strong. The difference between `virtual' and `essential' seems subtle.
\label{rem:thm_rem}
\end{remark}

Finally, we establish our main result:
\begin{proof}[Proof of Theorem \ref{thm:main1}]
By Definition \ref{def:nystedt-gradings}, the induced $G/N$-grading is epsilon-strong if and only if (i) the induced $G/N$-grading is symmetric, and (ii) $S_C S_{C^{-1}}$ is unital for each $C \in G/N$. By Proposition \ref{prop:symmetric}, (i) holds. Moreover, by Proposition \ref{prop:greatest}, (ii) holds if and only if $E_C$ has an upper bound in $E(S_N)$ for each $C \in G/N$. 
\end{proof}

\section{Epsilon-finite gradings}
\label{sec:5}

In this section, we introduce a subclass of epsilon-strong $G$-gradings with the special property that any induced $G/N$-grading is epsilon-strong.

\begin{definition}
Let $G$ be an arbitrary group and let $S$ be an epsilon-strongly $G$-graded ring. Put $F=\bigvee \{ \epsilon_g \mid g \in G \}$. We call the $G$-grading \emph{epsilon-finite} if $F$ is a finite set. In that case, we say that $S$ is \emph{epsilon-finitely $G$-graded}.
\label{def:epsilon_finite}
\end{definition}

\begin{remark}
We make some remarks regarding Definition \ref{def:epsilon_finite}.
\begin{enumerate}[(a)]
\begin{item}
Note that the set $\{ \epsilon_g \mid g \in G \}$ is finite if and only if $\bigvee \{ \epsilon_g \mid g \in G\}$ is finite.
\end{item}
\begin{item}
A unital strongly $G$-graded ring $S=\bigoplus_{g \in G}S_g$ is epsilon-finite since $\epsilon_g = 1_S$ for every $g \in G$ (see \cite[Prop. 8]{nystedt2016epsilon}).
\end{item}
\begin{item}
An epsilon-strongly $G$-graded ring $S$ with finite support, i.e. $|\text{Supp}(S) | < \infty$, is necessarily epsilon-finite. However, the converse does not hold in general. Consider e.g. the complex group ring $\mathbb{C}[\mathbb{Z}]$, which is epsilon-finite but not finitely supported. 
\end{item}
\end{enumerate}
\label{rem:finite}
\end{remark}

\begin{proposition}
Let $S$ be an epsilon-finitely $G$-graded ring. For any normal subgroup $N$ of $G$, the induced $G/N$-grading is epsilon-finite.
\label{prop:epsilon_finite}
\end{proposition}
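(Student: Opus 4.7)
The plan is to apply Theorem \ref{thm:main}(a) together with Corollary \ref{cor:finite} and then verify the finiteness condition on the new idempotents directly. First, by Theorem \ref{thm:main}(a), for each class $C \in G/N$ the set $E_C = \bigvee\{\epsilon_g \mid g \in C\}$ is a set of local units for $S_C S_{C^{-1}}$. The key observation is that the join-closure $E_C$ is a subset of $F = \bigvee\{\epsilon_g \mid g \in G\}$, since every element of $E_C$ is a finite join of elements $\epsilon_g$ with $g \in C \subseteq G$.

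Next, epsilon-finiteness of the original grading says that $F$ is finite, so $E_C \subseteq F$ is also finite. Invoking Corollary \ref{cor:finite}, this finite set of local units has a greatest element, which serves as a multiplicative identity $\epsilon_C$ of $S_C S_{C^{-1}}$. Combined with the symmetry of the induced $G/N$-grading (which follows from Proposition \ref{prop:symmetric}, since an epsilon-strongly graded ring is symmetrically graded by definition), Definition \ref{def:nystedt-gradings}(a) yields that the induced $G/N$-grading is epsilon-strong.

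It remains to verify that the induced $G/N$-grading is epsilon-finite, i.e.\ that $\bigvee\{\epsilon_C \mid C \in G/N\}$ is finite. By Remark \ref{rem:finite}(a), it suffices to show that the set $\{\epsilon_C \mid C \in G/N\}$ itself is finite. But by the construction above, each $\epsilon_C$ lies in $E_C \subseteq F$, so $\{\epsilon_C \mid C \in G/N\} \subseteq F$, which is finite by assumption. This completes the proof.

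I do not foresee any real obstacle here: the statement is essentially a packaging of Theorem \ref{thm:main}(a) with the elementary observation that a finite set of commuting idempotent local units admits a maximum, and the finiteness of $F$ trivially transfers to the new identities $\epsilon_C$ since they are themselves elements of $F$.
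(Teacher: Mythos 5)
Your proof is correct and follows essentially the same route as the paper: apply Theorem \ref{thm:main}(a) to get the local units $E_C$, observe $E_C \subseteq F$ is finite, invoke Corollary \ref{cor:finite} and Proposition \ref{prop:greatest} to obtain the identity $\epsilon_C$ of $S_C S_{C^{-1}}$, and deduce finiteness of $\{\epsilon_C \mid C \in G/N\}$ from that of $F$. Your final step (noting $\epsilon_C \in E_C \subseteq F$ directly) is a marginally cleaner packaging of the paper's argument via the finite index sets $I_C$, but it is the same idea.
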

\begin{proof}
We begin by showing that the induced $G/N$-grading is epsilon-strong. By Theorem \ref{thm:main}(a), the induced $G/N$-grading is essentially epsilon-strong and it holds for every $C \in G/N$ that $E_C=\bigvee \{ \epsilon_g \mid g \in C \}$ is a set of local units for $S_C S_{C^{-1}}$.  To prove that the induced $G/N$-grading is epsilon-strong, we need to show that $S_C S_{C^{-1}}$ is unital for each class $C \in G/N$. Fix an arbitrary class $C \in G/N$. Then, $E_C=\bigvee \{ \epsilon_g \mid g \in C \} \subseteq \bigvee \{ \epsilon_g \mid g \in G \}$, where the latter set is finite by assumption. Hence, $E_C$ is finite and by Corollary \ref{cor:finite}, $E_C$ has a greatest element $\epsilon_C$ which is the multiplicative identity element of $S_C S_{C^{-1}}$ by Proposition \ref{prop:greatest}. Thus, the induced $G/N$-grading is epsilon-strong. 

Let $C \in G/N$ again be an arbitrary class. Note that $\epsilon_C = \bigvee_{g \in C} \epsilon_g = \bigvee_{i \in I_C} \epsilon_i$ for some finite set $I_C \subseteq C$ which completely determines $\epsilon_C$.  But, $\{ \epsilon_i \mid i \in \bigcup_{C \in G/N} I_C \} \subseteq \{ \epsilon_g \mid g \in G \}$ is a finite set. Thus, $\{ \epsilon_C \mid C \in G/N \}$ is finite.
\end{proof}

\begin{remark}At this point we make two remarks.
\begin{enumerate}[(a)]
\begin{item}
By Proposition \ref{prop:epsilon_finite}, the functor $U_{G/N}$ restricts to the category of epsilon-finitely $G$-graded rings.
\end{item}
\begin{item}
In the next section, we we will give an example  of an epsilon-strongly $G$-graded ring $(S, \{ S_g \}_{g \in G}) \in \text{ob}(G \mhyphen \epsilon \mathrm{STRG})$ which is not epsilon-finite but for which there is a normal subgroup $N$ of $G$ such that $(S, \{ S_C \}_{C \in G/N}) \in \text{ob}(G/N \mhyphen  \mathrm{STRG})$ (Example \ref{ex:2}). In particularly, note that $(S, \{ S_C \}_{C \in G/N})$ is epsilon-finite by Remark \ref{rem:finite}(b).
\end{item}
\end{enumerate}
\label{rem:finite_rem}
\end{remark}

We continue by proving that having a noetherian principal component is a sufficient condition for a grading to be epsilon-finite. Recall (see e.g. \cite[Ch. 7.22]{lam2001first}) that a ring $R$ is called \textit{block decomposable} if $1 \in R$ decomposes into $1 = c_1 + c_2 + \dots + c_r$ where $c_i$ are central primitive idempotents. If this holds then any central idempotent can be expressed as a finite sum $c = \sum_i c_i$ where the sum is taken over all $i$ such that $c c_i \ne 0$. In particular, this implies that the set of central idempotents is finite. 

\begin{theorem}
Let $S$ be an epsilon-strongly $G$-graded ring. The following assertions hold:
\begin{enumerate}[(a)]
\begin{item}
If $S_e$ is block decomposable, then $S$ is epsilon-finitely $G$-graded;
\end{item}
\begin{item}
If $S_e$ is left or right noetherian, then $S$ is epsilon-finitely $G$-graded.
\end{item}
\end{enumerate}
Hence, in particular, if $S_e$ is left or right noetherian and $N$ is any normal subgroup of $G$, then the induced $G/N$-grading of $S$ is epsilon-strong.
\label{thm:induced_epsilon}
\end{theorem}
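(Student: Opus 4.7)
The plan is to establish (a), derive (b) from (a), and then conclude the final assertion by combining (b) with Proposition \ref{prop:epsilon_finite}. This last step is immediate: if $S_e$ is one-sided noetherian, then by (b) $S$ is epsilon-finitely $G$-graded, so by Proposition \ref{prop:epsilon_finite} the induced $G/N$-grading is epsilon-finite, which implies in particular that it is epsilon-strong.

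For (a), I will use that each $\epsilon_g$ is a central idempotent of $S_e$, as recalled just before Theorem \ref{thm:main1}. Given a block decomposition $1_{S_e} = c_1 + \cdots + c_r$ into pairwise orthogonal central primitive idempotents, any central idempotent $e \in S_e$ satisfies $e = \sum_{i=1}^{r} e c_i$, and each $ec_i$ is a central idempotent with $ec_i \leq c_i$. Primitivity forces $ec_i \in \{0, c_i\}$, so the assignment $e \mapsto \{ i : ec_i = c_i \}$ injects the set of central idempotents of $S_e$ into the power set of $\{1, \dots, r\}$. Hence there are at most $2^r$ central idempotents in $S_e$; in particular $\{\epsilon_g : g \in G\}$ is finite, and by Remark \ref{rem:finite}(a) the set $F = \bigvee\{\epsilon_g : g \in G\}$ is finite as well. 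Therefore $S$ is epsilon-finitely $G$-graded.

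For (b), by (a) it suffices to prove that any one-sided noetherian unital ring $R$ is block decomposable. Assume $R$ is left noetherian (the right case is analogous). I will iteratively split $1_R$: start from the trivial decomposition $1_R = 1_R$, and as long as some summand $d$ in the current orthogonal central decomposition fails to be central primitive, replace $d$ by two nonzero orthogonal central idempotents summing to $d$. The core claim is that this process terminates. If not, the resulting binary splitting tree is infinite, so by K\"onig's lemma it contains an infinite strictly descending branch $d^{(0)} > d^{(1)} > d^{(2)} > \cdots$ of central idempotents of $R$. Setting $f_i := d^{(i-1)} - d^{(i)}$ produces a sequence of nonzero pairwise orthogonal central idempotents, and then $Rf_1 \subsetneq R(f_1 + f_2) \subsetneq R(f_1 + f_2 + f_3) \subsetneq \cdots$ is a strictly ascending chain of left ideals; strictness follows because a putative equality $f_{n+1} = r(f_1+\cdots+f_n)$ would, upon right-multiplication by $f_{n+1}$, yield $f_{n+1} = 0$. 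This contradicts left noetherianity, so the splitting process must terminate and $R$ is block decomposable.

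The main obstacle is the termination argument in (b): one must carefully extract an infinite pairwise orthogonal family of nonzero central idempotents from a non-terminating sequence of binary splittings and then translate it into an infinite ascending chain of one-sided ideals in order to contradict the noetherian hypothesis.
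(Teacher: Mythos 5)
Your proof is correct and follows the paper's route exactly: (a) from centrality of the $\epsilon_g$'s together with the fact that a block decomposable ring has only finitely many central idempotents (your $e\mapsto\{i : ec_i=c_i\}$ injection is precisely the justification the paper sketches in the paragraph preceding the theorem), (b) by reduction to (a), and the final claim via Proposition \ref{prop:epsilon_finite}. The only divergence is that where the paper simply cites \cite[Prop.~22.2]{lam2001first} for ``one-sided noetherian implies block decomposable,'' you supply a self-contained and valid proof, extracting an infinite family of nonzero pairwise orthogonal central idempotents from a non-terminating splitting and turning it into a strictly ascending chain of left ideals.
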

\begin{proof}
(a): Note that $\epsilon_g$ is a central idempotent of $R$ (see \cite[Prop. 5]{nystedt2016epsilon}). This implies that ${\bigvee \{ \epsilon_g \mid g \in G \}}$ is a subset of the set of central idempotents of $R$. But since $R$ is assumed to be block decomposable, there are only finitely many central idempotents in $R$. Hence, the $G$-grading is epsilon-finite.  

(b): By \cite[Prop. 22.2]{lam2001first}, one-sided noetherianity is a sufficient condition for $R$ to be block decomposable. 
\end{proof}

\section{Examples}
\label{sec:6}

The class of unital partial skew group rings is an important type of rings with a natural epsilon-strong group grading. In this section, we will consider two concrete examples of unital partial skew group rings. Throughout, let $G$ be an arbitrary group with neutral element $e$ and let $R$ be an arbitrary ring equipped with a multiplicative identity element.

Partial actions of groups were first introduced in the study of $C^*$-algebras by Exel \cite{exel1994circle}. A later development by Dokuchaev and Exel \cite{dokuchaev2005associativity} was to consider partial actions on a ring in a purely algebraic context. Given a partial action on a ring $R$, they constructed the \emph{partial skew group ring} of the partial action, generalizing the classical skew group ring of an action on a ring. The partial skew group ring of a general partial action is not necessarily associative (cf. \cite[Expl. 3.5]{dokuchaev2005associativity}). However, the subclass of \emph{unital partial actions} (see Definition \ref{def:partial_action}) give rise to associative partial skew group rings. In fact, it is enough to assume that the partial action is idempotent (see \cite[Cor. 3.2]{dokuchaev2005associativity}).

\begin{definition}
A \emph{unital partial action} of $G$ on $R$ is a collection of unital ideals $\{ D_g \}_{g \in G}$ and ring isomorphisms $\alpha_g \colon D_{g^{-1}} \to D_g$ such that, 
\begin{enumerate}
\begin{item}
$D_e = R$ and $\alpha_e = id_R.$
\end{item}
\begin{item}
$\alpha_g(D_{g^{-1}} D_h) = D_g D_{gh}$ for all $g, h \in G$.
\end{item}
\begin{item}
For all $x \in D_{h^{-1}} D_{(gh)^{-1}}$, $\alpha_g(\alpha_h(x))=\alpha_{gh}(x)$.
\end{item}
\end{enumerate}
\label{def:partial_action}
\end{definition}
We let $1_g$ denote the multiplicative identity element of the ideal $D_g$. Note that the $1_g$'s are central idempotents in $R$. 

\smallskip

Recall (cf. e.g. \cite[Sect. 5]{lannstrom2018chain}) that a unital partial action $\alpha$ of a group $G$ on a ring $R$ gives us a unital, associative algebra called the \emph{unital partial skew group ring} $R \star_\alpha G = \bigoplus_{g \in G} D_g \delta_g$, where the $\delta_g$'s are formal symbols.  The multiplication is defined by linearly extending the relations,
\begin{equation*}
(a_g \delta_g)(b_h \delta_h) = a_g \alpha_g( b_h 1_{g^{-1}}) \delta_{gh},
\label{eq:partial_multiplication}
\end{equation*} 
for $g,h \in G$ and $a_g \in D_g, b_h \in D_g$. The relations in Definition \ref{def:partial_action} essentially tell us that this multiplication is well-defined. Moreover, $R \star_\alpha G$ is epsilon-strongly $G$-graded with $\epsilon_g = 1_g \delta_0$ (see \cite[pg. 2]{nystedt2016epsilon}).

\smallskip

Next, we will give an example of a unital partial action such that the unital partial skew group ring has a quotient grading which is not epsilon-strong.
\begin{example}

Let $R:=\text{Fun}(\mathbb{Z} \to \mathbb{Q})=\prod_{\mathbb{Z}} \mathbb{Q}$ be the algebra of bi-infinite sequences with component-wise addition and multiplication. 
Define a partial action of $(\mathbb{Z},+,0)$ on $R$ in the following way. Let $D_0 = R$ and $D_i = e_i R$, $i \ne 0$ where $e_i$ is the Kronecker delta sequence. More precisely, $e_i(j) = \delta_{i,j}$ for all $i,j \in \mathbb{Z}$. Moreover, define $\alpha_i \colon D_{-i} \to D_i$ by,
$$ f \mapsto (i \mapsto f(-i) ).$$

Note that $D_g D_h = (0)$ if $g,h \ne 0$ and $g \ne h$. Moreover, $D_g D_0 = D_0 D_g = D_g$ for all $g \in G$. This means, condition (2) in Definition \ref{def:partial_action} is satisfied for the case $g \ne 0, h \ne -g$. But on the other hand if $g \ne 0, h = -g$ then the condition reads $\alpha_g(D_{-g} D_{-g}) = D_{g} D_0$ which holds since $\alpha_g$ is an isomorphism. The case $g=0$ is also clear. Hence, condition (2) holds. Next consider condition (3). If $h \ne 0, g \ne - h$, then $D_{-h}D_{-g-h} = (0)$ hence the condition is trivially satisfied. On the other hand, if $h \ne 0, g = -h$, then the condition reads $\alpha_{-h}(\alpha_h(x)) = \alpha_0(x)$ for all $x \in D_{-h}D_0 = D_{-h}$, which also holds for our definition of $\alpha$. Finally, if $h=0$, then condition (3) just reads $\alpha_g(x) = \alpha_g(x)$ for all $x \in D_{-g}$ since $\alpha_0=\text{id}$ by definition. This proves that $\alpha$ is a unital partial action.

Consider the partial skew group ring $R \star_\alpha \mathbb{Z}$ with the canonical grading ${R \star_\alpha \mathbb{Z} = \bigoplus_{n \in \mathbb{Z}} D_n \delta_n}$. This grading is epsilon-strong with $\epsilon_i = e_i\delta_0$ (see \cite[pg. 1]{nystedt2016epsilon}). In particular, note that the set $ \{ \epsilon_i \mid i \in \mathbb{Z} \}$ is infinite and contains infinitely many orthogonal central idempotents.

Next, note that the induced $\mathbb{Z}/2\mathbb{Z}$-grading on $R \star_\alpha \mathbb{Z}$ has components, $$S_0 = \bigoplus_{n \in \mathbb{Z}} D_{2n} \delta_{2n}, \quad S_1 = \bigoplus_{n \in \mathbb{Z}} D_{2n+1} \delta_{2n+1}.$$ We will show that $S_1 S_1$ does not admit a multiplicative identity element, which implies that the induced $\mathbb{Z}/2\mathbb{Z}$-grading is not epsilon-strong.

Now, by Theorem \ref{thm:main}(a),  $$E = \bigvee \{ \epsilon_{2n+1} \mid n \in \mathbb{Z} \},$$ is a set of local units for $S_1 S_1$. By Proposition \ref{prop:greatest}, $S_1 S_1$ is unital if and only if $E$ has an upper bound in $E(S_1 S_1)\subseteq S_1S_1$. A moments thought gives that, if such an element exists, it must be $d \delta_0$ where $d \in R$, $d(2n+1)=1$ and $d(2n)=0$ for all $n \in \mathbb{Z}$. To conclude we prove that $ d \delta_0 \not \in S_1 S_1$. Note that any element $x \delta_0 \in S_1 S_1$ is a finite sum of elements of the form, $$(a \delta_{2n+1}) (b \delta_{-2n-1}) = \alpha_{2n+1}(\alpha_{-2n-1}(a) b) \delta_0 = a_i b_i' \delta_0,$$ where $a b' \in R$ is a function satisfying $\text{Supp}(a b')=\{2n+1\}$. This implies that, $\text{Supp}(x) < \infty$ for any element of the form $x \delta_0 \in S_1 S_1$. On the other hand, $\text{Supp}(d) = \infty$. Hence, $d \delta_0 \not \in S_1 S_1$. Thus, the induced $\mathbb{Z}/2 \mathbb{Z}$-grading is not epsilon-strong.

\label{ex:1}
\end{example}
\begin{remark}
Note that Example \ref{ex:1} shows that the functor $U_{G/N}$ does not restrict to the category $G \mhyphen \epsilon \textrm{STRG}$ (see Question \ref{question:2}). Also note that the induced quotient group grading in Example \ref{ex:1} is an example of a grading that is essentially epsilon-strong (in fact, virtually epsilon-strong) but not epsilon-strong (cf. Theorem \ref{thm:main}).
\label{rem:example_rem}
\end{remark}

The next example shows that being epsilon-finite is not a necessary condition for the induced quotient group grading to be epsilon-finite (cf. Remark \ref{rem:finite_rem}(b)).
\begin{example}

Let the group $(\mathbb{Z}, +,0)$ act (globally) on $R=\text{Fun}(\mathbb{Z} \to \mathbb{Q})$ by bilateral shifting. That is, $1$ acts by mapping,
\begin{equation*}
\dots \quad  a_{-3}  \quad  a_{-2}  \quad  a_{-1} \quad  \underline{ a_0} \quad  a_1 \quad  a_2 \quad  a_3  \quad \dots
\end{equation*} 
to
\begin{equation*}
\dots \quad  a_{-4}  \quad  a_{-3}  \quad  a_{-2} \quad  \underline{a_{-1}} \quad  a_0 \quad  a_1 \quad  a_2  \quad \dots
,
\end{equation*} 
and $-1$ similarly shifts the sequence one step in the other direction. 

Let $\beta \colon \mathbb{Z} \to \text{Aut}(R)$ be the group homomorphism corresponding to this action. By restricting the (global) action to an ideal of $R$ we get a partial action (see e.g. \cite[Section 4]{dokuchaev2005associativity}).
More precisely, let $A:= \{ f \in R \mid f(0) = 0 \}$. Then $A$ is a unital ideal of $R$. Moreover, the natural partial action $\{ \alpha_n \}_{n \in \mathbb{Z}}$ is defined on the ideals, $$D_n := A \cap \beta_n(A) = \{ f \in R \mid f(0)=f(n)=0 \},$$ for $n \in \mathbb{Z}$. For example $\alpha_2 \colon D_{-2} \to D_2$ maps,
\begin{equation*}
\dots \quad  a_{-3}  \quad  0  \quad  a_{-1} \quad  \underline{ 0} \quad  a_1 \quad  a_2 \quad  a_3  \quad \dots
\end{equation*} 
to
\begin{equation*}
\dots \quad  a_{-5}  \quad  a_{-4}  \quad  a_{-3} \quad  \underline{ 0} \quad  a_{-1} \quad  0 \quad  a_1  \quad \dots
\end{equation*} 


Consider the partial skew group ring $A \star_\alpha \mathbb{Z}=\bigoplus_{n \in \mathbb{Z}} D_n \delta_n$. For $n \in \mathbb{Z}$, the multiplicative identity element of $D_n$ is given by, 
\begin{equation*}
1_n(i) = \begin{cases} 0 & i=0 \\ 0 & i =n \\ 1 & \text{otherwise} \end{cases}
\end{equation*} for all $i,j \in \mathbb{Z}$. Note that the set $\{ \epsilon_n \mid n \in \mathbb{Z} \} = \{ 1_n \delta_0 \mid n \in \mathbb{Z} \}$ is infinite but the epsilons are not orthogonal!

Next, consider the induced $\mathbb{Z}/2\mathbb{Z}$-grading of $A \star_\alpha \mathbb{Z} = S_0 \oplus S_1$ where $S_0 = \bigoplus_{n \in \mathbb{Z}} D_{2n} \delta_{2n}$ and $S_1 = \bigoplus_{n \in \mathbb{Z} } D_{2n+1} \delta_{2n + 1} .$ Note that $S_0$ is a subring of $A \star_\alpha \mathbb{Z}$ with multiplicative identity $1_0 \delta_0$. We will show that $1_0 \delta_0 \in S_1 S_1$, proving that the induced grading is strong (see e.g. \cite[Prop. 1.1.1]{nastasescu2004methods}). Indeed, let $\gamma(i) = 0$ for $i \ne -1$ and $\gamma(-1)=1$. Then, $$(\gamma \delta_{-3})(1_{3}\delta_{3}) + (1_{-1}\delta_{-1})(1_1 \delta_1) = \gamma \delta_0 + 1_{-1} \delta_0 = 1_0 \delta_0.$$ Thus, the induced $\mathbb{Z}/2\mathbb{Z}$-grading is strong.

\label{ex:2}
\end{example}

\section{Induced quotient group gradings and epsilon-crossed products}
\label{sec:7}

The class of epsilon-crossed product is defined analogously to the classical algebraic crossed product (see \cite[Def. 32]{nystedt2016epsilon}). Moreover, the category of epsilon-crossed products is denoted by $G \mhyphen \epsilon \mathrm{CROSS}$ (see Section \ref{sub:epsilon_cross}). On the other hand, recall (see \cite[pg. 1]{nystedt2016epsilon}) the  notion of a \emph{unital partial crossed product}. This construction is a priori unrelated to the epsilon-crossed product and generalizes the classical algebraic crossed product by a twisted action. The full definition is rather technical. However, for our purposes it suffices to note that a unital partial skew group (see Definition \ref{def:partial_action}) is a special type of unital partial crossed product. 
The relationship between epsilon-crossed products and unital partial crossed products is described in the following theorem by Nystedt, Öinert and Pinedo:

\begin{theorem}
(\cite[Thm. 33]{nystedt2016epsilon})
If $(S, \{ S_g \}_{g \in G}) \in \textnormal{ob}(G \mhyphen \epsilon \mathrm{CROSS})$, then $S$ can be presented as a unital partial crossed product by a unital twisted partial action of $G$ on $S_e$. Conversely, if $S = \bigoplus_{g \in G}  D_g \delta_g$ is a unital partial crossed product, then $(S, \{ D_g \delta_g \}_{g \in G}) \in \textnormal{ob}(G \mhyphen \epsilon \mathrm{CROSS})$.
\label{thm:epsilon_crossed}
\end{theorem}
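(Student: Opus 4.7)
The plan is to prove both implications separately. For the forward direction, suppose $(S, \{S_g\}_{g\in G}) \in \text{ob}(G\mhyphen\epsilon\mathrm{CROSS})$. Since $S$ is epsilon-strongly graded, Proposition \ref{prop:epsilon_unital} gives that $S$ is unital, and for each $g \in G$ we have $S_g S_{g^{-1}} = \epsilon_g S_e$, which is a unital ideal of $S_e$ with identity element $\epsilon_g$ (a central idempotent). For each $g \in G$, I would choose an epsilon-invertible element $u_g \in S_g$ with epsilon-inverse $u_g^* \in S_{g^{-1}}$ satisfying $u_g u_g^* = \epsilon_g$ and $u_g^* u_g = \epsilon_{g^{-1}}$, normalizing by taking $u_e = \epsilon_e = 1_S$. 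Put $D_g := \epsilon_g S_e$.

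Next, I would define $\alpha_g \colon D_{g^{-1}} \to D_g$ by $\alpha_g(x) := u_g x u_g^*$. One checks that $\alpha_g$ maps $D_{g^{-1}}$ into $D_g$ using that $u_g S_e u_g^* \subseteq S_g S_e S_{g^{-1}} \subseteq \epsilon_g S_e = D_g$, that it is a ring homomorphism (because $u_g^* u_g = \epsilon_{g^{-1}}$ acts as identity on $D_{g^{-1}}$), and that $\alpha_{g^{-1}}$ is its inverse. The twist $w \colon G \times G \to S_e$ is then extracted from the products: because $u_g u_h \in S_{gh}$ is epsilon-invertible on the idempotent $\epsilon_g \alpha_g(\epsilon_h \epsilon_{g^{-1}})$, one defines $w(g,h) := u_g u_h u_{gh}^*$, a unit in the unital ideal $D_g \cdot D_{gh}$. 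The axioms (1)--(3) of Definition \ref{def:partial_action} (suitably twisted by $w$), together with the 2-cocycle identity for $w$, follow from associativity in $S$ and the fact that each $\epsilon_g \in Z(S_e)$. The resulting twisted partial action on $S_e$ yields a unital partial crossed product, and the map $s_g \mapsto (s_g u_g^*) \delta_g$ defines a graded ring isomorphism from $S$ onto this partial crossed product.

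For the converse direction, suppose $S = \bigoplus_{g \in G} D_g \delta_g$ is a unital partial crossed product with structure maps $(\{D_g\}, \{\alpha_g\}, w)$, where each $D_g$ is a unital ideal of $D_e = S_e$ with identity $1_g$. It is known (and used on p.\ 2 of \cite{nystedt2016epsilon}) that $S$ is epsilon-strongly graded with $\epsilon_g = 1_g \delta_e$. I would show that the element $1_g \delta_g \in D_g \delta_g = S_g$ is epsilon-invertible. Using the cocycle identity, the element $w(g, g^{-1})$ is invertible in the unital ring $D_g$; write $w(g,g^{-1})^{-1}$ for its inverse in $D_g$. Then the candidate epsilon-inverse is $t := \alpha_{g^{-1}}(w(g, g^{-1})^{-1}) \delta_{g^{-1}} \in S_{g^{-1}}$. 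A direct computation using the multiplication rule $(a \delta_g)(b \delta_h) = a \alpha_g(b 1_{g^{-1}}) w(g, h) \delta_{gh}$ shows $(1_g \delta_g) t = 1_g \delta_e = \epsilon_g$, and a symmetric computation yields $t(1_g \delta_g) = 1_{g^{-1}} \delta_e = \epsilon_{g^{-1}}$, establishing epsilon-invertibility for every $g \in G$.

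The main obstacle is bookkeeping. Verifying compatibility of $\alpha_g$ with the ideals $D_h$ and $D_{gh}$, and checking the full twisted cocycle identity (condition (3) of Definition \ref{def:partial_action} with a twist) requires keeping careful track of where each expression lives, particularly because the $\epsilon_g$'s do not equal $1_S$ in general. A secondary subtlety lies in the choice of the epsilon-invertible representatives $u_g$: a naive choice will not give $u_g u_h = u_{gh}$, which is precisely why the cocycle $w$ appears and why the twisted (rather than the ordinary) partial crossed product is the correct target. Once the bookkeeping is set up, the isomorphism between $S$ and the constructed partial crossed product is routine to verify on each homogeneous component.
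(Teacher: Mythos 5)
This theorem is quoted from \cite[Thm.~33]{nystedt2016epsilon} and the paper gives no proof of its own, so there is nothing internal to compare against; your proposal reconstructs essentially the argument of the cited source. Both directions are sound as outlined: building the unital twisted partial action from a choice of epsilon-invertible elements $u_g$ via $D_g=\epsilon_g S_e$, $\alpha_g(x)=u_g x u_g^*$ and $w(g,h)=u_g u_h u_{gh}^*$, and conversely exhibiting $1_g\delta_g$ as epsilon-invertible using the invertibility of $w(g,g^{-1})$ in $D_g$ (where the identity $t(1_g\delta_g)=\epsilon_{g^{-1}}$ indeed rests on the cocycle consequence $\alpha_{g^{-1}}(w(g,g^{-1}))=w(g^{-1},g)$, as you note).
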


\vspace{-1em}
With this characterization in mind, we will now consider Question \ref{question:1}. Unfortunately, it does not hold in general that the induced $G/N$-grading of an epsilon-crossed product gives an epsilon-crossed product. The following example shows that the functor $U_{G/N}$ does not restrict to $G \mhyphen \epsilon \mathrm{CROSS}$. 

\begin{example}
Consider the unital partial skew group ring $(R \star_\alpha \mathbb{Z}, \{ D_i \delta_i \}_{i \in \mathbb{Z}})$ given in Example \ref{ex:1}. Note that $(R \star_\alpha \mathbb{Z}, \{ D_i \delta_i \}_{g \in \mathbb{Z}})$ is an epsilon-crossed product by Theorem \ref{thm:epsilon_crossed}. However, by Example \ref{ex:1}, the induced $\mathbb{Z}/2\mathbb{Z}$-grading $U_{\mathbb{Z} / 2\mathbb{Z}}((R \star_\alpha \mathbb{Z}, \{ D_i \delta_i \}_{i \in \mathbb{Z}})) \not \in \text{ob}(\mathbb{Z} / 2 \mathbb{Z} \mhyphen \epsilon \mathrm{STRG})$. Thus, in particular, $U_{\mathbb{Z} / 2\mathbb{Z}}((R \star_\alpha \mathbb{Z}, \{ D_i \delta_i \}_{i \in \mathbb{Z}})) \not \in \text{ob}(\mathbb{Z} / 2 \mathbb{Z} \mhyphen \epsilon \mathrm{CROSS}).$
\end{example}

The following proposition will allow us to find examples where the condition in Question \ref{question:1} is true.

\begin{proposition}
Let $G$ be an arbitrary group, let $R$ be a unital ring and let $\{\alpha_g \}_{g \in G}$ be a unital partial action on $R$ where $1_g$ denotes the multiplicative identity element of the ideal $D_g$. Assume that,
\begin{enumerate}[(a)]
\begin{item}
$D_g D_h = (0)$ for all $g,h \in G$ such that $g \ne h$ and $g,h \ne e$,
\end{item}
\begin{item}
the set $\{ 1_g \mid g \in G \}$ is finite. 
\end{item}
\end{enumerate}

Let $N$ be any normal subgroup of $G$. Then the induced $G/N$-grading of $R \star_\alpha G$ gives an epsilon-crossed product. In other words, $U_{G/N}((R \star_\alpha G, \{ D_g \delta_g \}_{g \in G})) \in \textnormal{ob}(G/N \mhyphen \epsilon \mathrm{CROSS})$.
\label{prop:partial_crossed}
\end{proposition}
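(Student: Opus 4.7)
My plan is to produce, for each coset $C \in G/N$, an explicit epsilon-invertible element in the homogeneous component $S_C = \bigoplus_{g \in C} D_g \delta_g$ of the induced grading. First I would verify that the induced $G/N$-grading is at least epsilon-strong. Since $\epsilon_g = 1_g \delta_e$ for each $g \in G$, hypothesis (b) says precisely that $\{\epsilon_g \mid g \in G\}$ is finite, so $R \star_\alpha G$ is epsilon-finitely $G$-graded in the sense of Definition \ref{def:epsilon_finite}. Proposition \ref{prop:epsilon_finite} then ensures that the induced $G/N$-grading is epsilon-finite (hence epsilon-strong), with $\epsilon_C = \bigvee_{g \in C} \epsilon_g$.

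Next I would draw a structural consequence of (a): for any $g \neq h$ in $G$ with $g, h \neq e$, the inclusion $1_g 1_h \in D_g D_h = (0)$ shows that $\{1_g \mid g \in G,\ g \neq e\}$ is a set of pairwise orthogonal central idempotents. Combined with (b), this implies that for each coset $C \neq N$ the index set $I_C := \{g \in C \mid 1_g \neq 0\}$ is finite and consists entirely of non-$e$ elements.

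The construction then splits into two cases. For $C = N$, the multiplicative identity $1_R \delta_e$ of $R \star_\alpha G$ lies in $S_N$ and coincides with $\epsilon_N$ (because $1_e = 1_R$ absorbs every $1_g$ for $g \in N$ in the join); it is trivially epsilon-invertible with itself as partner. For $C \neq N$, I set
\[
s_C := \sum_{g \in I_C} 1_g \delta_g \in S_C, \qquad t_C := \sum_{g \in I_C} 1_{g^{-1}} \delta_{g^{-1}} \in S_{C^{-1}},
\]
and compute $s_C t_C$ using the multiplication rule $(a\delta_g)(b\delta_h) = a\, \alpha_g(b\, 1_{g^{-1}})\, \delta_{gh}$. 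The cross term indexed by $(g, g') \in I_C \times I_C$ equals $1_g\, \alpha_g(1_{g'^{-1}} 1_{g^{-1}})\, \delta_{gg'^{-1}}$; for $g \neq g'$ the inverses $g^{-1}$ and $g'^{-1}$ are distinct and non-$e$ (this is where $C \neq N$ is used), so the orthogonality observation forces $1_{g'^{-1}} 1_{g^{-1}} = 0$. Only diagonal terms survive, each equal to $1_g\, \alpha_g(1_{g^{-1}})\, \delta_e = 1_g \delta_e = \epsilon_g$. Pairwise orthogonality of $\{\epsilon_g \mid g \in I_C\}$ then yields $s_C t_C = \bigvee_{g \in I_C} \epsilon_g = \bigvee_{g \in C} \epsilon_g = \epsilon_C$, and a symmetric computation gives $t_C s_C = \epsilon_{C^{-1}}$, so $s_C$ is epsilon-invertible in $S_C$.

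The main subtlety is the vanishing of the cross terms, which depends both on hypothesis (a) and on excluding $e$ from $I_C$; this is precisely why the case $C = N$ must be treated separately. Once these two cases are in place, every component of the induced $G/N$-grading contains an epsilon-invertible element, giving $U_{G/N}((R \star_\alpha G, \{D_g \delta_g\}_{g \in G})) \in \textnormal{ob}(G/N \mhyphen \epsilon \mathrm{CROSS})$ as claimed.
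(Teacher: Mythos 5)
Your proof is correct, and its core construction is the same as the paper's: the epsilon-invertible element $\sum_{g} 1_g\delta_g \in S_C$ paired with $\sum_g 1_{g^{-1}}\delta_{g^{-1}} \in S_{C^{-1}}$, with hypothesis (a) killing the cross terms. Two points of divergence are worth noting. First, for the preliminary step (that the induced $G/N$-grading is epsilon-strong) you route through epsilon-finiteness and Proposition \ref{prop:epsilon_finite}, using only hypothesis (b), whereas the paper argues directly via Theorem \ref{thm:main} and Proposition \ref{prop:greatest}, using (a) to get orthogonality and hence existence of the joins; both are valid, and yours shows (a) is not needed for this part. Second, and more substantively, you split off the case $C=N$ and treat it with the trivial epsilon-invertible element $1_R\delta_e=\epsilon_N$. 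This is not mere pedantry: the vanishing of the cross terms $(1_g\delta_g)(1_{h^{-1}}\delta_{h^{-1}})$ genuinely requires $g^{-1},h^{-1}\neq e$, which fails when $e\in C$, so the uniform computation in the paper's proof of Proposition \ref{prop:partial_crossed} does not literally apply to the coset $N$ (where $1_e=1_R$ is necessarily among the representatives). Your case distinction repairs this cleanly. One small thing to make fully explicit: finiteness of $I_C=\{g\in C\mid 1_g\neq 0\}$ follows because (a) forces the nonzero $1_g$ with $g\neq e$ to be pairwise distinct (if $1_g=1_h\neq 0$ with $g\neq h$, then $1_g1_h=1_g\neq 0$ contradicts (a)), so (b) bounds the number of such $g$; you gesture at this but it deserves a sentence.
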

\begin{proof}
We first show that the $G/N$-grading of $R \star_\alpha G$ is epsilon-strong. Write $R \star_\alpha G = \bigoplus_{C \in G/N} S_C$ and take a class $C \in G/N$. By Theorem \ref{thm:main}, we need to show that $E_C = \bigvee \{ 1_g \delta_0 \mid g \in C \}$ has an upper bound. Note that assumption (a) implies that $1_g 1_h=0$ for $g \ne h \ne e$. Hence, by (b) and Proposition \ref{prop:greatest}, $$\epsilon_C = \bigvee_{ g \in C } 1_g \delta_0 = 1_{g_1} \delta_0 + 1_{g_2} \delta_0 + \dots 1_{g_n} \delta_0, $$ for some choice of representatives $g_i \in C$ for $1 \leq i \leq n$ satisfying, $$\{ 1_{g_i } \mid 1 \leq i \leq n \} = \{ 1_g \mid g \in C \}.$$ Hence, $S_C S_{C^{-1}}$ is unital. Since $C \in G/N$ was chosen arbitrarily, it follows that the induced $G/N$-grading of $R \star_\alpha G$ is epsilon-strong.

Next, we show that for each $C \in G/N$ there is an epsilon-invertible element in $S_C$. For $g, h \in G$ and $x \in D_g, y \in D_h$, we have that $(x \delta_g)(y \delta_h) = \alpha_{g}(\alpha_{g^{-1}}(x)y) \delta_{gh} = 0$ if $h \ne g^{-1}$ since $D_{g^{-1}} D_h = (0)$ by assumption. Moreover, it is easy to check that $1_{g_i} \delta_0 = (1_{g_i} \delta_{g_i})(1_{g_i^{-1}} \delta_{g_i^{-1}})$ for $1 \leq i \leq n$. Let $s= ( 1_{g_1} \delta_{g_1} + \dots + 1_{g_n} \delta_{g_n}) \in S_C $ and $t = (1_{g_1^{-1}} \delta_{g_1^{-1}} + \dots + 1_{g_n^{-1}} \delta_{g_n^{-1}}) \in S_{C^{-1}}$. Then, 
\begin{equation*}
st = \sum_{1 \leq i,j \leq n} (1_{g_i} \delta_{g_i})(1_{g_j^{-1}} \delta_{g_j^{-1}}) = \sum_{1 \leq i \leq n} (1_{g_i} \delta_{g_i})(1_{g_i^{-1}} \delta_{g_i^{-1}}) = \sum_{1 \leq i \leq n} 1_{g_i} \delta_0 = \epsilon_C.
\end{equation*}
Thus, $(R \star_\alpha G, \{ S_C \}_{C \in G/N})=U_{G/N}((R \star_\alpha G, \{ D_g \delta_g \}_{g \in G}))$ is an epsilon-crossed product.
\end{proof}

It is not clear to the author if the conditions in Proposition \ref{prop:partial_crossed} are necessary. The following is an explicit example where the induced quotient group grading gives an epsilon-crossed product. In other words, an example of a unital partial skew group ring (which is an epsilon-crossed product by Theorem \ref{thm:epsilon_crossed}) such that the condition in Question \ref{question:1} is true.
\begin{example}
Let $R=\mathbb{Q} \times \mathbb{Q} \times \mathbb{Q} \times \mathbb{Q}$ and let $G$ be the cyclic group of order 4. Then $G$ acts on $R$ by shifting. Let $e_i$ denote the Kronecker sequence, i.e. $e_i(j) = \delta_{i,j}$ for all integers $i,j$. Consider the unital ideal $A=e_1R + e_2R$ and the induced partial action on $A$. We get that $D_0=A, D_1=e_2 \mathbb{Q}, D_2=0, D_3=e_1\mathbb{Q}$.  Now, consider the unital partial skew group ring $A \star_\alpha G=D_0 \delta_0 \oplus D_1\delta_1 \oplus D_2 \delta_2 \oplus D_3 \delta_3.$ 

Let $N$ be the cyclic group of order 2. By Proposition \ref{prop:partial_crossed}, the induced $G/N$-grading gives an epsilon-crossed product. That is, $U_{G/N}((R \star_\alpha G, \{ D_g \delta_g \}_{g \in G})) \in \text{ob}(G/N \mhyphen \epsilon \mathrm{CROSS})$.  Note that, $$S_{[0]} = D_0 \delta_0 \oplus D_2 \delta_2 = D_0 \delta_0, \quad S_{[1]} = D_1 \delta_1 \oplus D_3 \delta_3. $$ Furthermore, $$\epsilon_{[1]}=(1,1)\delta_0=( (0,1)\delta_1 + (1,0) \delta_3) ( (0,1) \delta_1 + (1,0) \delta_3) ,$$ where ${(0,1)\delta_1 + (1,0)\delta_3 \in S_{[1]}}$ is an epsilon-invertible element.     

\end{example}

\section*{acknowledgement}
This research was partially supported by the Crafoord Foundation (grant no. 20170843). The author is grateful to Stefan Wagner, Johan Öinert and Patrik Nystedt for giving feedback and comments that helped to improve this manuscript. The author is also grateful to an anonymous referee for giving useful comments.

\bibliographystyle{abbrv}
\bibliography{induced_epsilon-strong}

\end{document}